\documentclass[11pt,reqno]{article}

\usepackage{caption,amsmath,amssymb,amsfonts,amsthm,latexsym,graphicx,multirow,color,hyperref,enumerate}
\usepackage{graphicx}
\usepackage{subcaption}
\usepackage{tikz}
\usetikzlibrary{arrows.meta, decorations.markings}
\usetikzlibrary{positioning}

\newtheorem{theorem}{Theorem}[section]

\newtheorem{lemma}[theorem]{Lemma}
\newtheorem{proposition}[theorem]{Proposition}

\theoremstyle{definition}
\newtheorem{definition}[theorem]{Definition}

\newtheorem*{notation}{Notation}

\numberwithin{equation}{section}
\numberwithin{table}{section}

\def\a{\alpha} \def\G{\Gamma}
 
\newcommand{\Aut}{\mathrm{Aut}}
\def\Cay{\hbox{\rm Cay}}

\def\f{\noindent}
\def\mz{{\mathbb Z}}

\begin{document}
	\title{On $m$-partite oriented semiregular representations of finite groups}
	
		\author{
		Jia-Li Du\thanks{School of Mathematical Sciences, Nanjing Normal University, Nanjing, 210023, P.R. China}
		\thanks{ Ministry of Education Key Laboratory of NSLSCS, Nanjing, 210023, P.R. China}}
	
	\date{}
	\maketitle

	\begin{abstract}
		The study of ORR was inspired by L\'{a}zsl\'{o} Babai in 1980 when he asked a question: Which [finite] groups admit an oriented graph as a DRR? And it has been solved by Joy Morris and Pablo Spiga through a series of papers in 2018. In this paper, we will extend the concept of ORR	 
		to $m$-partite oriented graphs for $m\geq 2$. We say that a finite group $G$ admits an \emph{$m$-partite oriented semiregular representation} ($m$-POSR) if there exists an $m$-partite oriented graph $\G$ such that its automorphism group is isomorphic to $G$ and acts semiregularly with the $m$ orbits giving the partition.
		 Moreover, if $\G$ is regular,
		that is, each vertex has
		the same in- and out-valency,
		it can be viewed as the oriented version of an $m$-Haar graph of $G$ and we call $\G$ is an \emph{$m$-Haar oriented representation} ($m$-HOR) of $G$. 	
		Our main result is a complete classification of finite groups $G$ without $m$-HORs or $m$-POSRs for $m\geq 2$.\medskip

		\f {\bf Keywords:} Semiregular group, regular representation, $m$-Cayley digraph, oriented graph, ORR.
		\medskip
		
		\f {\bf 2010 Mathematics Subject Classification:} 05C25, 20B25.
	\end{abstract}

\section{Introduction}

The study of graphical representation can be traced back to a question raised by K$\ddot{o}$nig~\cite{Konig} in 1936: whether a given group can be represented as the automorphism group of a graph. A group $G$ is said to admit a {\em{\rm(}di{\rm)}graphical
regular representation} (GRR or DRR) if there exists a (di)graph $\G$ such that its automorphism group is isomorphic to $G$ and acts regularly on the vertex set, and $\G$ is called a GRR or DRR of $G$ corresponding. Babai~\cite{Babai} proved that every finite group admits a DRR except for $Q_8$, $\mz_2^2$, $\mz_2^3$, $\mz_2^4$ and $\mz_3^2$. Compared with DRRs, GRRs proves to be significantly more challenging. For some of the most influential papers along the way, we refer to~\cite{Godsil,Hetzel,Imrich,ImrichWatkins2,NowitzWatkins1,NowitzWatkins2}.
	A digraph is said to be {\em oriented} if there is at most one arc between two vertices. We say that a group $G$ admits an {\em oriented regular representation} (ORR) if there exists an oriented graph such that its automorphism group is isomorphic to $G$ and acts regularly on the vertex set. The concept was first proposed in 1980, when Babai~\cite{Babai} raised the following question: Which [finite] groups admit an oriented graph as a DRR? In 2018, Morris and Spiga~\cite{MorrisSpiga1,MorrisSpiga2,Spiga} completed  the classification of the finite groups admitting ORRs.\smallskip
	
	A group $G$ is said to admit an {\em oriented $m$-semiregular representation} (O$m$SR)
	if there exists an oriented graph $\G$ such that its automorphism group is isomorphic to $G$ and acts semiregular on $V(\G)$ with $m$ orbits
	 for some positive integer $m$, and $\G$ is called an O$m$SR of $G$.
	If $\G$ is regular, we say $G$ admits a \emph{regular oriented $m$-semiregular representation} (regular O$m$SR), and $\G$ is a regular O$m$SR of $G$. Clearly, O$1$SRs are ORRs. In 2024, Du, Feng and Bang \cite{DFB} gave the classification of 
finite groups admitting O$m$SRs and regular O$m$SRs with $m\geq 2$. Now, we extend the definition to $m$-partite oriented graphs for $m\geq 2$. We say that a group $G$ admits an \emph{$m$‐partite oriented semiregular representation} ($m$-POSR) if there exists an $m$-partite oriented graph $\G$ such that  its automorphism group is isomorphic to $G$ and acts semiregularly with the $m$ orbits giving the partition, and $\G$ is called an $m$-POSR of $G$. Moreover, if $\G$ is regular,
then $\G$ is called a \emph{regular $m$-partite oriented graphical representation} (regular $m$-POSR) of $G$. \smallskip	
	
		On the other hand, a regular $m$-POSR of $G$ can be viewed as the oriented version of an $m$-Haar graphical representation of $G$ (which will be introduced later). Extending the well-studied concept of GRR or DRR to bipartite (di)graphs, a \emph{Haar (di)graphical representation} (HGR or HDR) of a group $G$ is a bipartite (di)graph whose automorphism group is isomorphic to $G$ and acts semiregularly with the orbits giving the bipartition. Notice that Haar graphs are necessarily regular. Recently, Morris and Spiga give the classification of HGRs of finite groups in \cite{MorrisSpiga3}. The $m$-Haar graphical
representation ($m$-HGR) as a natural generalization of
HGR to $m$-partite graphs for $m\geq 2$ was introduced in~\cite{DFXY}. Formally, a graph $\G$ is called an \emph{$m$-Haar graphical representation} ($m$-HGR) of a group $G$ if $\G$ is regular and $m$-partite such that its automorphism group is isomorphic
to $G$ and acts semiregularly on the vertex set with orbits giving the $m$-partition. In the same paper~\cite{DFXY}, the complete classification of finite groups without $m$-HGRs was given. For digraphs, the complete classification of 
$m$-HDRs was established by Du, Feng and Spiga~\cite{DFS2020-1,DFS2022}.\smallskip

 Now, we consider Haar graphs of oriented case and extend it to $m$-partite graphs. An oriented graph $\G$ is a {\em Haar oriented graph} of a group $G$ if $\G$ is regular, bipartite and its automorphism
 group has a subgroup
 isomorphic to $G$ and acts semiregularly with
 the orbits giving the bipartition.
 In the following, we give the definitions of the $m$-Haar oriented graph and the $m$-Haar oriented representation of finite groups.
	
		\begin{definition}\label{DefmHOR}
		For an integer $m\geq2$, an oriented graph $\Gamma$ is called an \emph{$m$-Haar oriented graph} of a group $G$ if $\Gamma$ is regular and $m$-partite such that the automorphism group $\Aut(\Gamma)$ has a subgroup isomorphic to $G$ and acts semiregularly on the vertex set with orbits giving the $m$-partition. Moreover, if $\Aut(\G)$ is isomorphic to $G$,
		then $\G$ is called an
		\emph{$m$-Haar oriented representation} ($m$-HOR) of $G$.
		\end{definition}
	
	By the definition, we know that an $m$-HOR is an $m$-POSR.
	But an $m$-POSR is not an $m$-HOR unless it is regular. In this paper, we consider $m$-HORs and $m$-POSRs for finite group with $m\geq 2$, and the main results of this paper as follows:
	
	\begin{theorem}\label{theo=main}
		Let $m\geq2$ be an integer and let $G$ be a finite group. Then $G$ admits an $m$-HOR, unless
		\begin{enumerate}
			\item[\rm(1)] $m=2$, and $G$ is isomorphic to either $Q_8$, $\mz_2^2$,
			$\mz_2^3$, $\mz_2^4$ or $\mz_n$ with $1\leq n\leq 5$;
			\item[\rm(2)]  $m=3$, and $G$ is isomorphic to $\mz_1$ or $\mz_2$;
			\item[\rm(3)]  $4\leq m\leq 6$, and $G$ is isomorphic to $\mz_1$.
		\end{enumerate}
	\end{theorem}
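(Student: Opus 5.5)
The plan has two halves: constructions showing that every group outside the list admits an $m$-HOR, and non-existence arguments for the listed exceptions. I will use the standard $m$-Cayley digraph model. Write $V=G\times\{1,\dots,m\}$ with $G$ acting by right multiplication, and for $i\neq j$ fix connection sets $T_{i,j}\subseteq G$. Put an arc $(g,i)\to(tg,j)$ for each $t\in T_{i,j}$, and no arcs inside a part. The graph is oriented exactly when $T_{j,i}\cap T_{i,j}^{-1}=\emptyset$ for all $i\neq j$. It is regular exactly when, for each $i$, the out-valency $\sum_j|T_{i,j}|$ and the in-valency $\sum_j|T_{j,i}|$ are the same constant.

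The goal in each construction is to choose the $T_{i,j}$ so that three things happen. First, $\Aut(\G)$ fixes each part setwise. Second, the stabiliser of a vertex $(1,1)$ is trivial. Third, no automorphism induces a nontrivial permutation of the parts, or acts as a non-identity element on a part. The main tool for the second point is the known DRR/ORR theory. For $m=2$ I will take $T_{1,2}=S$ and $T_{2,1}$ equal to a set $R$ disjoint from $S^{-1}$, with $|R|=|S|$. The subgraph of paths $(g,1)\to(sg,2)\to(rsg,1)$ then induces on part $1$ a Cayley digraph $\Cay(G,RS)$. If this can be made a DRR (Babai), the stabiliser of $(1,1)$ fixes part $1$ pointwise, and hence fixes everything.

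For $m\geq3$ I will use a cyclic "spine" $T_{i,i+1}=\{1\}$, indices mod $m$. I will then add asymmetric extra sets, for instance $T_{1,3}$ of a distinct size, to break the rotational symmetry of the parts. Since $G$ is arbitrary, I will handle generic groups uniformly by taking $T_{1,2}$ to be a generating set encoding a DRR/ORR of $G$. For $m\geq7$ and $G=\mz_1$, an asymmetric regular tournament-like oriented graph on $m$ vertices with a partite structure suffices, i.e.\ an asymmetric regular oriented graph on $m$ vertices. Such graphs exist for $m\geq7$, and I will cite or construct an explicit one.

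For the exceptions, the case $G=\mz_1$ with $m\leq 6$ amounts to showing that no asymmetric regular oriented graph exists on at most $6$ vertices. This is a finite check, using counting over in/out degrees. For $m=3$ and $G=\mz_2$, a regular oriented $3$-partite graph on $6$ vertices will be shown to have an extra automorphism, by a short case analysis on valency $1$ or $2$ and the arrangement of arcs. For $m=2$ and $G\in\{Q_8,\mz_2^2,\mz_2^3,\mz_2^4\}$, I will argue as follows. If $G$ is abelian, the map $(g,1)\leftrightarrow(g^{-1},2)$ composed with inversion reverses the structure appropriately, forcing an extra automorphism. For the groups where Babai's DRR fails, the induced graph argument breaks and I will instead exhibit a non-trivial automorphism directly, e.g.\ via the inversion map on $(G,1)\cup(G,2)$ when $T$ can be normalised. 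For $Q_8$ and the small elementary abelian $2$-groups I will use a computer-style enumeration up to $\Aut(G)$. For $\mz_n$ with $n\leq5$, the bipartite regular oriented graphs on $2n$ vertices are few enough to enumerate.

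The main obstacle is the $m=2$ case for general $G$. Here I must make sure that the bipartite oriented graph admits no automorphism swapping the two parts, and that $\Cay(G,RS)$ (or a suitable derived digraph) is a DRR. Small groups need individual treatment, e.g.\ $\mz_6$, $\mz_7$ and $D_8$, and near-exceptional groups such as $\mz_3^2$ need a modified construction because they lack a DRR. For these I will first try choosing $|S|\neq|R|$-type asymmetry through different valency patterns. This is not available in a regular graph, so I will instead use distinct cycle-structure invariants, such as the number of directed $4$-cycles through an arc, to distinguish the parts.
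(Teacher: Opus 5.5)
There is a genuine gap, and it lies in the existence half of the theorem: you never actually exhibit an $m$-HOR for the groups outside the list.

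For $m=2$ your mechanism needs the two-step digraph $\Cay(G,RS)$ to be a DRR with $|R|=|S|$ and $R\cap S^{-1}=\emptyset$. Babai's theorem only gives \emph{some} DRR connection set, and there is no reason it factors as such a product $RS$. So this step is an unproved strengthening of Babai. It is also unavailable outright for $\mz_3^2$, which has no DRR yet must have a $2$-HOR.

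Even granting the DRR, you leave open how to exclude automorphisms interchanging $G\times\{1\}$ and $G\times\{2\}$, and this is a real obstruction. For $G=\langle x\rangle$ cyclic, $S=\{1\}$ and $R=\{x\}$ make $\Cay(G,RS)$ a DRR. Yet $\G$ is then a directed cycle of length $2|G|$, whose automorphism group swaps the parts. Your ``directed $4$-cycle count'' remedy is not attached to any concrete choice of $R,S$.

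For $m\ge 3$ the plan is not pinned down. Your spine already sets $T_{1,2}=\{1\}$, yet you also want $T_{1,2}$ to carry a DRR. An extra $T_{1,3}$ of different size must be compensated elsewhere to keep regularity. There is no argument that parts are preserved. Nothing covers $Q_8$, $\mz_2^2$, $\mz_2^3$, $\mz_2^4$, $\mz_3^2$ for $m\ge3$, none of which has a DRR.

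The missing idea is a rigidity argument that needs no DRR. The paper chooses explicit sets from a generating set: $S=\{1,a,a^{-1},b\}$, $T=\{b,c,ba,cb\}$ when $d(G)=3$, and $S=\{1,h_i\}$, $T=\{h_1h_t,\overline{h_i}\}$ when $d(G)\ge4$. It then argues as follows:
\begin{enumerate}
\item Common out-neighbour counts in $[\G^+(1_1)]$ and $[\G^+(1_2)]$ show that $A$ fixes both parts and that $A_{1_1}$ fixes $1_2$.
\item Hence $A_{1_1}=A_{1_2}$, so the pairs $\{g_1,g_2\}$ form a block system.
\item By Proposition~\ref{prop=Haar}, each $\a\in A_{1_1}$ then induces $\overline{\a}\in\Aut(\Cay(G,S))_1\cap\Aut(\Cay(G,T))_1$.
\item The $\overline{\a}$-invariant sets $S^k\cap T$ force $\overline{\a}$ to fix a generating set pointwise, so $\overline{\a}=1$.
\end{enumerate}
For $m\ge3$, further parts are glued onto this rigid $2$-HOR by singleton connection sets. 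The parts are told apart by directed $3$-cycles or induced out-neighbourhoods. The cases $d(G)\le2$ and elementary abelian $2$-groups are delegated to \cite{DKY} and \cite{DFB}.

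Separately, discard your abelian non-existence argument. For abelian $G$, the map $(g,1)\mapsto(g^{-1},2)$, $(g,2)\mapsto(g^{-1},1)$ carries $\G$ onto its \emph{converse}, not onto itself, so it produces no automorphism. If it did, no abelian group would have a $2$-HOR, contradicting the statement for, e.g., $\mz_7$. The $m=2$ exceptions must come from enumeration or citation, as in the paper. Your finite checks for $\mz_1$ with $m\le6$ and for $\mz_2$ with $m=3$ are fine in principle.
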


	\begin{theorem}\label{theo=main1}
	Let $m\geq2$ be an integer and let $G$ be a finite group. Then $G$ admits an $m$-POSR except $m=2$ and $G$ is isomorphic to either $\mz_3$, $\mz_2^2$,
	or $\mz_2^3$.
\end{theorem}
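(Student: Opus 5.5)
We will prove Theorem~\ref{theo=main1} in two parts: we construct POSRs in all the allowed cases, and we show non-existence in the three exceptional cases.

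\textbf{Reduction via $m$-HORs.} Every $m$-HOR is an $m$-POSR. So Theorem~\ref{theo=main} already settles every pair $(m,G)$ outside its exceptional list. What remains is to build non-regular $m$-partite oriented graphs for the following pairs:
\begin{itemize}
\item $m=2$ with $G\in\{Q_8,\mz_2^4,\mz_1,\mz_2,\mz_4,\mz_5\}$;
\item $m=3$ with $G\in\{\mz_1,\mz_2\}$;
\item $4\le m\le 6$ with $G=\mz_1$.
\end{itemize}
We will also show that none exists for $m=2$ and $G\in\{\mz_3,\mz_2^2,\mz_2^3\}$.

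We describe an $m$-partite oriented graph with a semiregular $G$ by the $m$-Cayley data: parts $G\times\{i\}$, and for $i\neq j$ a set $T_{i,j}\subseteq G$ with $(g,i)\to(tg,j)$ for $t\in T_{i,j}$. Orientedness means that $T_{j,i}\cap T_{i,j}^{-1}=\emptyset$. We drop regularity, so the parts may be given different in/out-valencies. This breaks symmetries between parts and forces every automorphism to fix each part setwise.
\begin{itemize}
\item For $G=\mz_1$, a POSR is just an asymmetric oriented graph on $m$ vertices with no arcs inside parts; every $m\ge2$ works. For $m=2$ take a single arc. For larger $m$ take an asymmetric tournament on $m$ vertices, or ad hoc small asymmetric oriented graphs.
\item For $\mz_2$ with $m=2$: the vertices are $a_0,a_1,b_0,b_1$. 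Take the arcs $a_0\to b_0$, $a_1\to b_1$, $b_0\to a_1$. This is a directed path of length 3, whose automorphism group is trivial, not $\mz_2$. So the choice of $T$'s must be made carefully, checking the candidates exhaustively.
\end{itemize}
For each remaining small case we will exhibit explicit sets $T_{i,j}$ and verify $\Aut(\G)\cong G$. The verification first uses valencies to show that each part is fixed. On a fixed part, the bipartite structure then determines the action, and a stabilizer argument shows that the stabilizer of a vertex is trivial.

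For $Q_8$ and $\mz_2^4$ with $m=2$, we will use Babai's DRR-type constructions adapted to the bipartite setting. Choose $T_{1,2}=S$ and $T_{2,1}=R$ with $R\cap S^{-1}=\emptyset$ and $|S|\neq|R|$. Orientedness here only requires $R\cap S^{-1}=\emptyset$, so this is not very restrictive. We then argue that $\Aut(\G)_{(1,1)}$ fixes all out-neighbours pointwise, typically by using common-neighbour counts to distinguish the vertices.

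\textbf{Non-existence for $m=2$ and $G\in\{\mz_3,\mz_2^2,\mz_2^3\}$.} In a 2-POSR both parts are orbits of $G$, so the graph is determined by $S=T_{1,2}$ and $R=T_{2,1}$ with $S^{-1}\cap R=\emptyset$.

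For abelian $G$, consider the map $\sigma\colon(g,1)\mapsto(g^{-1},1)$, $(g,2)\mapsto(g^{-1},2)$. The arc condition $g\to sg$ maps to $g^{-1}\to s^{-1}g^{-1}$, so $\sigma$ is an automorphism only if $S=S^{-1}$ and $R=R^{-1}$. In exponent $2$ this holds automatically, so we get an extra automorphism — but for elementary abelian groups inversion is trivial, so this gives nothing. We therefore need a different extra automorphism for $\mz_2^2$ and $\mz_2^3$. Since $|G|\le 8$, the pair $(S,R)$ ranges over a small finite set up to $\Aut(G)$, so we will find such automorphisms case by case. Candidates are swapping the two parts (when $|S|=|R|$, combined with a group automorphism), or a transposition of two vertices with identical neighbourhoods. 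For $\mz_3$ there are only $4^3$ pairs, and we will enumerate them. This exhaustive elimination is the main obstacle. To carry it out we will use complementation-type symmetries to cut the cases down: reversing all arcs, and swapping $S$ and $R$.
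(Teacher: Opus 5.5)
Your reduction through Theorem~\ref{theo=main} is the same as the paper's. The transitive tournament (all out-valencies distinct) does give an $m$-POSR of $\mz_1$ for every $m\ge 2$. Beyond that, however, the proposal is a programme rather than a proof. Every remaining case is deferred (``we will exhibit explicit sets'', ``we will find such automorphisms case by case'', ``we will enumerate them''), and the one concrete attempt rests on a slip.

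Your $\mz_2$ example is not an $m$-Cayley digraph of $\mz_2$ at all: the arc set $\{a_0\to b_0,\,a_1\to b_1,\,b_0\to a_1\}$ is not invariant under the swap. No such digraph can have trivial automorphism group, because $R(G)\le\Aut(\G)$ always. Closing your arcs under $\mz_2$ gives a directed $4$-cycle, with $\Aut\cong\mz_4$. What works is $T_{1,2}=\{1\}$ and all other $T_{i,j}=\emptyset$, i.e.\ two disjoint arcs; connectedness is not part of the definition. For $m=3$, take two disjoint copies of a transitive tournament.

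Likewise, no connection sets are produced for $Q_8$, $\mz_4$, $\mz_5$. The paper takes $T_{1,2}=\{1,x,y\}$, $T_{2,1}=\{xy,x^2\}$ for $Q_8$; $T_{1,2}=\{1,x\}$, $T_{2,1}=\{x^2\}$ for $\mz_4$; and $T_{1,2}=\{1,x,x^{-1}\}$, $T_{2,1}=\{x^2\}$ for $\mz_5$, all verified in Magma. For the two cyclic groups one can also check by hand: the part-preserving automorphisms of the $G_1\to G_2$ arcs form a dihedral group, and no reflection in it preserves the matching $g_2\to(x^2g)_1$.

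The more substantial gap is the elementary abelian $2$-groups: the $2$-POSR of $\mz_2^4$ and, above all, the non-existence for $\mz_2^2$ and $\mz_2^3$. You leave these to an unspecified search over ``candidate'' extra automorphisms. The idea you are missing is the paper's. If every element of $G$ is an involution, then $T_{i,i}=T_{i,i}^{-1}$, so orientedness forces $T_{i,i}=\emptyset$. Hence for such $G$ an O$m$SR is the same thing as an $m$-POSR, and \cite[Theorem 1.2]{DFB} settles $\mz_1,\mz_2,\mz_2^2,\mz_2^3,\mz_2^4$ for all $m$ at once.

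If you prefer a self-contained non-existence proof, your ``group automorphism'' candidate can be made to work by counting:
\begin{enumerate}
\item[(a)] For abelian $G$, take $\pi\colon x\mapsto x^{\phi}d$ with $\phi\in\Aut(G)$ and $d\in G$. If $\pi$ preserves both $T_{1,2}$ and $T_{2,1}^{-1}$, then $g_1\mapsto(g^{\phi})_1$, $g_2\mapsto(g^{\phi}d)_2$ is an automorphism of $\G$ fixing $1_1$, and it is nontrivial when $\pi\neq 1$.
\item[(b)] Orientedness makes $(T_{1,2},\,T_{2,1}^{-1},\,G\setminus(T_{1,2}\cup T_{2,1}^{-1}))$ an ordered partition of $G$.
\item[(c)] The number of ordered partitions of $\mz_2^2$ (resp.\ $\mz_2^3$) with prescribed block sizes is at most $12$ (resp.\ $560$). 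This is less than $|\mathrm{AGL}(2,2)|=24$ (resp.\ $|\mathrm{AGL}(3,2)|=1344$), so some $\pi\ne 1$ preserves the partition, and $\Aut(\G)$ is never semiregular.
\end{enumerate}
The same count, with the relevant group now of order $6$, disposes of $\mz_3$ except when $|T_{1,2}|=|T_{2,1}|=1$. In that case $\G$ is a directed $6$-cycle, whose automorphism group is $\mz_6$.

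None of this is in your proposal. As written, it establishes only the reduction and the case $G=\mz_1$.
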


	This paper is organized as follows. In Section 2, we set up notations and review known results which will be used in this paper. In Section 3, we prove Theorem \ref{theo=main} when $G$ has at most three generators, and in Section 4, we prove Theorem~\ref{theo=main} when $G$ has at least four generators. The proof of Theorem~\ref{theo=main1} is given in Section 5.
	
	\section{ORRs, $m$-Cayley digraphs and notations}

	Let $G$ be a group. Denote by $d(G)$ the smallest size of a generating set of $G$. For $g\in G$, denote the order of $g$ by $|g|$. If $G$ acts on a set $\Omega$, then for $\omega \in \Omega$, the \emph{stabilizer of $\omega$} in $G$ is the subgroup of $G$ that consists of elements fixing $\omega$, denoted by $G_\omega$. We say that $G$ is \emph{semiregular} on $\Omega$ if $G_\omega=1$ for all $\omega \in \Omega$. Moreover, $G$ is \emph{regular} if it is semiregular and transitive.\smallskip
	
	For convenience in the subsequent discussion, we introduce some terminology and notations related to $m$-Cayley digraphs of the group $G$.
	For a subset $T$ of $G$, denote
	\[
	T^{-1}=\{t^{-1}:t\in T\}.
	\]
	Now let $m$ be a positive integer, and let $(T_{i,j})_{m\times m}$ be an $m\times m$ matrix with entries $T_{i,j}\subseteq G$ such that $1\notin T_{i,i}$ for all $i\in\{1,\dots,m\}$.
	Then the \emph{$m$-Cayley digraph} $\Cay(G,(T_{i,j})_{m\times m})$ of $G$ with \emph{connection matrix} $(T_{i,j})_{m\times m}$ is the digraph with vertex set
	\[
	G\times\{1,\dots,m\}=\bigcup_{i\in\{1,\dots,m\}}G_i,
	\]
	where $G_i=G\times\{i\}=\{g_i: g\in G\}$ and $g_i=(g,i)$, and arc set
	\begin{equation}\label{EqDef}
		\bigcup_{i,j\in\{1,\dots,m\}}\big\{\left( g_i, (tg)_j\right) :g\in G,\,t\in T_{i,j}\big\}.
	\end{equation}
	Note here that an $m$-Cayley digraph $\Cay(G,(T_{i,j})_{m\times m})$ is an oriented $m$-Cayley digraph if and only if $T_{i,j}\cap T_{j,i}^{-1}=\emptyset$ for all $i,j\in \{1,\cdots,m\}$. Without loss of generality, let $\Gamma=\Cay(G,(T_{i,j})_{m\times m})$ be an $m$-Cayley digraph of a group $G$ with respect to $T_{i,j} ~(i,j\in  \{1,\cdots,m\})$. For any element $g\in G$, the right multiplication $R(g)$ (i.e., mapping each vertex $x_i\in G_i$ to $(xg)_i\in G_i$ for all $i\in\{1,\cdots,m\}$) is an automorphism of $\G$, and $R(G)$ is a semiregular group of automorphisms of $\G$ with $G_i$ as orbits, where
	$$R(G):=\{ R(g)\ |\ g\in G\}.$$

	\begin{notation}
		Throughout the paper, $g_i$ is a concise notation for $(g,i)$ if $g$ is an element of a group and $i$ is an positive integer (we will still use $(g,i)$ sometimes if it makes the context clearer). Similarly, for a set $S$ of elements, $S\times\{i\}$ can be concisely denoted as $S_i$.
	\end{notation}
	
	The $1$-Cayley digraph of $G$ with connection matrix $(T)$ is exactly the Cayley digraph of $G$ with connection set $T$. A regular $m$-Cayley digraph $\Cay(G,(T_{i,j})_{m\times m})$ with $m\geq2$ and $T_{1,1}=\dots=T_{m,m}=\emptyset$ is called an \emph{$m$-Haar digraph}. Moreover, if $T_{i,j}=T_{j,i}^{-1}$ or $T_{i,j}\cap T_{j,i}^{-1}=\emptyset$ for each $i,j\in \{1,\cdots,m\}$, then $\Cay(G,(T_{i,j})_{m\times m})$ is called an \emph{$m$-Haar graph} or \emph{$m$-Haar oriented graph}, respectively.
	Hence Definition~\ref{DefmHOR} leads to the following fact.
	
	\begin{lemma}
		For a group $G$, the $m$-HORs of $G$ are the $m$-Haar oriented graphs of $G$ whose automorphism group is isomorphic to $G$.
	\end{lemma}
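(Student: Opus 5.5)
The lemma is essentially a restatement of Definition~\ref{DefmHOR} in the language of $m$-Cayley digraphs. The proof therefore proves two inclusions. The main tool is the standard identification of a digraph admitting a semiregular group with $m$ orbits as an $m$-Cayley digraph of that group.

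First, let $\G$ be an $m$-HOR of $G$. By definition $\G$ is a regular $m$-partite oriented graph. There is a subgroup $H\le\Aut(\G)$ with $H\cong G$ acting semiregularly, whose $m$ orbits $V_1,\dots,V_m$ are the parts, and in fact $\Aut(\G)\cong G$.

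Fix base points $v_i\in V_i$. By semiregularity, each vertex of $V_i$ is uniquely of the form $v_i^{h}$ with $h\in H$. Identify $v_i^{h}$ with $h_i$ and transport $H$ to $G$. Define
\[
T_{i,j}=\{t\in G: (1_i,t_j)\ \text{is an arc}\}.
\]
Since $H$ acts by right multiplication and preserves arcs, $(g_i,x_j)$ is an arc exactly when $(1_i,(xg^{-1})_j)$ is. So the arc set is exactly~\eqref{EqDef}, and $\G=\Cay(G,(T_{i,j})_{m\times m})$ with $H$ corresponding to $R(G)$.

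Because each $V_i$ is an independent set (the partition is an $m$-partition), $T_{i,i}=\emptyset$. Regularity of $\G$ gives regularity of the $m$-Cayley digraph. Orientedness, i.e.\ that there is no pair of arcs $(g_i,x_j)$ and $(x_j,g_i)$, translates to $T_{i,j}\cap T_{j,i}^{-1}=\emptyset$. Hence $\G$ is an $m$-Haar oriented graph of $G$, and its automorphism group is isomorphic to $G$.

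Conversely, let $\G=\Cay(G,(T_{i,j}))$ be an $m$-Haar oriented graph with $\Aut(\G)\cong G$. It is regular, and it is oriented by the condition on the $T_{i,j}$. It is $m$-partite with parts $G_i$, since $T_{i,i}=\emptyset$. The subgroup $R(G)\le\Aut(\G)$ is semiregular with the $G_i$ as orbits. So $\G$ satisfies Definition~\ref{DefmHOR}, and it is an $m$-HOR because $\Aut(\G)\cong G$.

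There is no real obstacle here. The only points needing care are, first, that an arc inside some $G_i$ would require $T_{i,i}\neq\emptyset$, so the $m$-partite condition is exactly $T_{i,i}=\emptyset$. Second, the condition $T_{i,j}\cap T_{j,i}^{-1}=\emptyset$ must be read with $i=j$ allowed, which is vacuous here because $T_{i,i}=\emptyset$.
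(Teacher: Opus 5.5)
Your proof is correct and takes the same approach as the paper, which gives no argument beyond noting that the lemma follows from Definition~\ref{DefmHOR}. Your write-up makes explicit the standard identification of such a graph with $\Cay(G,(T_{i,j})_{m\times m})$ via the semiregular subgroup with $m$ orbits. It also spells out the translation of ``$m$-partite'' and ``oriented'' into $T_{i,i}=\emptyset$ and $T_{i,j}\cap T_{j,i}^{-1}=\emptyset$, which the paper leaves implicit.
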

	
	In the following, we give the results of which will be used in the later.
	
	\begin{proposition}\label{prop=Haar}{\rm \cite[Lemma 3.2]{DFS2020-1}}
		Let $G$ be a finite group and let $\sigma $ be a permutation of $G$.
		Denote by $\sigma'$ the permutation of $G_1\cup G_2$ mapping
		$g_i$ to $g^{\sigma}_i$ for each $g\in G$ and $i=1,2$.
		Then the following are equivalent.
		\begin{enumerate}
			\item[\rm(1)] $\sigma\in \Aut(\Cay(G,S))\cap\Aut(\Cay(G,T))$,
			\item[\rm(2)]  $\sigma'\in\Aut(\Cay(G,(T_{i,j})_{2\times 2}))$ with $T_{1,2}=S$, $T_{2,1}=T$ and $T_{1,1}=T_{2,2}=\emptyset$.
		\end{enumerate}
	\end{proposition}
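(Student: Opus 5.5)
The statement concerns $\G=\Cay(G,(T_{i,j})_{2\times 2})$ with $T_{1,2}=S$, $T_{2,1}=T$ and $T_{1,1}=T_{2,2}=\emptyset$. The plan is to compare arcs directly. The arcs of $\G$ are exactly the pairs $(g_1,(sg)_2)$ with $s\in S$ and the pairs $(g_2,(tg)_1)$ with $t\in T$. The arcs of $\Cay(G,S)$ are the pairs $(g,sg)$ with $s\in S$. In other words, arcs from $G_1$ to $G_2$ in $\G$ correspond bijectively, by forgetting subscripts, to arcs of $\Cay(G,S)$, and arcs from $G_2$ to $G_1$ correspond to arcs of $\Cay(G,T)$. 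Since $\sigma'$ acts by the same permutation $\sigma$ on both parts and preserves each part $G_1,G_2$, everything reduces to this correspondence.

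For (1)$\Rightarrow$(2), I would take an arc $(g_1,(sg)_2)$. Its image under $\sigma'$ is $(g^\sigma_1,(sg)^\sigma_2)$. Because $\sigma\in\Aut(\Cay(G,S))$, the pair $(g^\sigma,(sg)^\sigma)$ is an arc of $\Cay(G,S)$, so $(sg)^\sigma=s'g^\sigma$ for some $s'\in S$. Hence the image is again an arc of $\G$. The same argument with $T$ handles arcs from $G_2$ to $G_1$. So $\sigma'$ maps arcs to arcs. Since $\sigma'$ is a bijection on a finite vertex set and the arc set is finite, it maps the arc set onto itself, and therefore $\sigma'\in\Aut(\G)$.

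For (2)$\Rightarrow$(1), I would run the argument in reverse. Given an arc $(g,sg)$ of $\Cay(G,S)$, the pair $(g_1,(sg)_2)$ is an arc of $\G$, and so is its image $(g^\sigma_1,(sg)^\sigma_2)$. Every arc of $\G$ from $G_1$ to $G_2$ has the form $(h_1,(s'h)_2)$, so $(sg)^\sigma=s'g^\sigma$ for some $s'\in S$, which makes $(g^\sigma,(sg)^\sigma)$ an arc of $\Cay(G,S)$. Again finiteness gives $\sigma\in\Aut(\Cay(G,S))$, and the analogous argument gives $\sigma\in\Aut(\Cay(G,T))$.

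There is no real obstacle here. The only point needing care is that $\sigma'$ preserves the parts by construction, so arcs from $G_1$ to $G_2$ are sent to arcs from $G_1$ to $G_2$ and never to arcs from $G_2$ to $G_1$. This is exactly what lets the single permutation $\sigma$ be tested separately against $S$ and against $T$.
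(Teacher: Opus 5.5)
Your proof is correct. The paper gives no proof of its own and simply quotes the result from \cite[Lemma 3.2]{DFS2020-1}; your argument is the standard direct check behind it. That check observes that $\sigma'$ preserves $G_1$ and $G_2$, that $(g_1,h_2)$ is an arc of $\Cay(G,(T_{i,j})_{2\times 2})$ exactly when $hg^{-1}\in S$, i.e.\ when $(g,h)$ is an arc of $\Cay(G,S)$, and likewise for arcs from $G_2$ to $G_1$ and $T$. It also covers the case $1\in S$ used later in the paper, where the loops of $\Cay(G,S)$ correspond to the arcs $(g_1,g_2)$.
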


	\begin{proposition}\label{prop=OmSR2}{\rm \cite[Lemma 3.1]{DKY}}
		Let $m\geq 2$ be an integer and let $G$ be a finite group.
Let $\G=\Cay(G,(T_{i,j})_{m\times m})$ be a connected $m$-Cayley digraph of $G$ with $i,j \in \{1,\cdots,m\}$. For $A=\Aut(\G)$, if $A$ stabilizes $G_i$ for all $i \in \{1,\cdots,m\}$ and there exist $u_i \in G_{i}$ such that $A_{u_i}$ fixes $\G^+(u_i)$ pointwise for all  $i \in\{1,\cdots,m\} $, then $A=R(G)$.
	\end{proposition}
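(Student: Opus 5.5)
The plan is to show that every vertex stabiliser in $A$ is trivial. Since $A$ fixes each $G_i$ setwise and $R(G)\le A$ is regular on each $G_i$, the Frattini argument gives $A=R(G)A_{u_1}$. So it suffices to prove $A_{u_1}=1$.

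\emph{Step 1: transport the hypothesis to every vertex.} Let $v\in G_i$. There is $g\in G$ with $v=u_i^{R(g)}$. Since $R(g)$ is an automorphism,
\[
A_v=R(g)^{-1}A_{u_i}R(g)\quad\text{and}\quad \Gamma^+(v)=\Gamma^+(u_i)^{R(g)}.
\]
Hence $A_v$ fixes $\Gamma^+(v)$ pointwise, for every vertex $v$. Equivalently, $A_v\le A_w$ whenever $(v,w)$ is an arc. Moreover, $|A_v|=|A_{u_i}|=:s_i$ depends only on the part $G_i$ containing $v$.

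\emph{Step 2: propagate the fixed points.} Take $\alpha\in A_{u_1}$. By Step 1, the fixed-point set of $\alpha$ is closed under taking out-neighbours. So $\alpha$ fixes everything reachable from $u_1$ by directed paths. To reach all of $V(\Gamma)$ we must also pass backwards along arcs, i.e.\ show $A_w\le A_v$ for every arc $(v,w)$. Since all groups are finite and $A_v\le A_w$, this amounts to $|A_w|\le|A_v|$.

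The natural route is to show that every arc lies on a directed cycle. Along a directed cycle the inclusions $A_{v_0}\le A_{v_1}\le\cdots\le A_{v_0}$ force all these stabilisers to be equal. Connectedness of $\Gamma$ would then give $A_v=A_{u_1}$ for every vertex $v$. Then $A_{u_1}$ fixes all vertices, and so $A_{u_1}=1$ because $A$ acts faithfully.

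\emph{Step 3: the main obstacle.} The hard step is showing that every arc lies on a directed cycle, i.e.\ that a connected $m$-Cayley digraph is strongly connected. For $m=1$ this is the classical fact that an arc $g\to tg$ closes up via the path $g\to tg\to t^2g\to\cdots\to t^{|t|}g=g$.

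For general $m$, I would first try a counting argument. Every vertex of $G_i$ has exactly $|T_{i,j}|$ out-neighbours in $G_j$. Dually, every vertex of $G_j$ has exactly $|T_{i,j}|$ in-neighbours in $G_i$. So the $R(G)$-invariant structure is balanced between parts. I would then pass to a strongly connected component $C$ that is terminal, meaning no arc leaves it. The aim is to compare the number of arcs leaving the union of the $R(G)$-translates of $C$ with the number entering it, and to conclude that no arc enters it either. This forces $C$ to be all of $V(\Gamma)$.

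If this does not go through directly, the fallback is to use Step 1 together with the orders $s_i$. Along an arc from $G_i$ to $G_j$ we have $s_i\le s_j$. I would then look for a cycle in the quotient digraph on $\{1,\dots,m\}$ (an arc $i\to j$ whenever $T_{i,j}\ne\emptyset$) to obtain equalities. I expect this to be where the connectedness hypothesis, and the precise meaning of "connected" used in the cited \cite[Lemma 3.1]{DKY}, has to be handled with care.
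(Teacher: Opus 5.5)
Your Steps 1 and 2 are correct. They reduce everything to showing $A_w\le A_v$ for every arc $(v,w)$. Step 3 leaves this open, and the route you propose cannot work, because a connected $m$-Cayley digraph satisfying all the hypotheses need not be strongly connected.

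Take $G=\mathbb{Z}_3=\langle x\rangle$, $m=2$, $T_{1,1}=T_{2,2}=\{x\}$, $T_{1,2}=\{1\}$ and $T_{2,1}=\emptyset$. The underlying graph is connected. Vertices of $G_1$ have out-valency $2$ and vertices of $G_2$ have out-valency $1$, so $A$ stabilizes $G_1$ and $G_2$. Moreover $\Gamma^+(1_1)=\{x_1,1_2\}$ meets two different $A$-orbits and $\Gamma^+(1_2)=\{x_2\}$, so all hypotheses hold. Yet no arc goes from $G_2$ to $G_1$. Hence the arc $(1_1,1_2)$ lies on no directed cycle, and the quotient digraph on $\{1,2\}$ has no arc $2\to 1$. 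The directed $3$-cycle on $G_2$ is a terminal strong component that arcs do enter. So your counting argument would be proving a false statement, and your fallback has no cycle to use.

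The missing observation is that $s_i$ does not depend on $i$. Each $G_i$ is $A$-invariant and $R(G)$ is already transitive on it, so $G_i$ is an $A$-orbit of size $|G|$. By the orbit--stabilizer theorem, $|A_v|=|A|/|G|$ for every vertex $v$. Equivalently, apply your own factorization $A=R(G)A_{u_i}$, with $R(G)\cap A_{u_i}=1$, for every $i$ rather than only $i=1$.

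Then $A_v\le A_w$ together with $|A_v|=|A_w|$ gives $A_v=A_w$ for every arc $(v,w)$. So the stabilizer is constant along arcs traversed in either direction, and weak connectivity alone gives $A_v=A_{u_1}$ for all $v$. Thus $A_{u_1}$ fixes every vertex, so $A_{u_1}=1$ and $A=R(G)A_{u_1}=R(G)$. No directed cycles or strong connectivity are needed.

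The paper quotes this result from \cite{DKY} without proof, so there is no in-paper argument to compare against.
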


	The next proposition is the result about the classification of $m$-HOR for finite groups generated by at most two elements with $m\geq 2$.
	
	\begin{proposition}\label{prop=m-HOR}{\rm \cite[Theorem 1.1]{DKY}}
		Let $m\geq 2$ be an integer and let $G$ be a finite group with $d(G)\leq 2$.
		Then one of the following holds:
		\begin{enumerate}
			\item [\rm(1)] $G$ has an $m$-HOR of valency two;
			\item [\rm(2)] $m=2$ and $G\cong  \mz_2^2$, $Q_8$ or $\mz_n$ with $1\leq n\leq 5$;
			\item [\rm(3)] $m=3$ and $G\cong \mz_1$ or $\mz_2$;
			\item [\rm(4)] $4\leq m\leq 6$ and $G\cong \mz_1$.
		\end{enumerate}
	\end{proposition}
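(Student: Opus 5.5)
The plan is to give, for each admissible pair $(m,G)$ with $d(G)\le 2$, an explicit $m$-Haar oriented graph $\G=\Cay(G,(T_{i,j})_{m\times m})$ in which every vertex has out-valency $2$ and in-valency $2$. We then prove $\Aut(\G)=R(G)$ using Proposition~\ref{prop=OmSR2}. The exceptional pairs in (2)--(4) are handled separately, by showing that no valency-two (in fact no) $m$-HOR exists.

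\emph{Construction.} Write $G=\langle a,b\rangle$, allowing $a=b$ or $b=1$ when $G$ is cyclic or trivial. The skeleton is a directed $m$-cycle on the parts, $T_{i,i+1}=\{1\}$ with indices taken mod $m$. This already gives every vertex one out-neighbour and one in-neighbour. For the second arc at each vertex we add a carefully chosen second layer. On one or two of the cycle edges we put one extra element, $T_{1,2}=\{1,a\}$ and $T_{2,3}=\{1,b\}$. We also add a few "chord" sets $T_{i,j}=\{1\}$ with $j\ne i\pm1$, arranged so that every part has total out-valency $2$ and in-valency $2$; for large $m$ these are chosen by a fixed pattern, for instance skipping by $2$. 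The sets must satisfy three conditions:
\begin{itemize}
\item $T_{i,j}\cap T_{j,i}^{-1}=\emptyset$, so that $\G$ is oriented;
\item the elements placed in the $T_{i,j}$ generate $G$, so that $\G$ is connected;
\item the pattern of chords is asymmetric, so that no nontrivial symmetry of the quotient digraph on the parts $\{1,\dots,m\}$ preserves the arc multiplicities.
\end{itemize}
For $m=2$ there are no chords. In that case the graph is a Haar digraph with $T_{1,2}=S$ and $T_{2,1}=T$, and Proposition~\ref{prop=Haar} reduces part-preserving automorphisms to $\Aut(\Cay(G,S))\cap\Aut(\Cay(G,T))$. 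Here we choose $|S|=|T|=2$ with $S\cap T^{-1}=\emptyset$.

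\emph{Automorphisms.} To show that $A=\Aut(\G)$ stabilizes each $G_i$, we count short directed cycles through a vertex, of lengths $2,3,\dots,m$ together with the lengths the chords create. The count is constant on each $G_i$, and the asymmetric chord pattern makes these counts differ between parts, or at least between parts not identified by a quotient automorphism. To show that $A_{u_i}$ fixes $\G^+(u_i)$ pointwise, we observe that the two out-neighbours of $u_i$ lie in different parts in almost all cases, and then the first step already separates them. When both out-neighbours lie in the same part, namely $1_{i+1}$ and $a_{i+1}$, we separate them by a local invariant: one of them lies on a directed cycle of the skeleton of length $m$ through $u_i$ and the other does not, unless $a$ has small order. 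Once both hypotheses hold, Proposition~\ref{prop=OmSR2} gives $A=R(G)$.

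\emph{Small cases and exceptions.} When $|G|$ is small, especially for cyclic $G$ of small order, $\mz_2^2$ and $Q_8$, the generic choices may create extra coincidences. These are adjusted by hand, for example by taking $T_{1,2}=\{a,b\}$ instead, and checked case by case, with computer verification when $|G|\cdot m$ is small. For the exceptional pairs, where $\G$ has at most $12$ vertices in (2)--(3) and at most $6$ in (4), we enumerate all oriented $m$-Haar graphs and exhibit an extra automorphism in each. For abelian $G$ and $m=2$ a natural candidate is $g_i\mapsto (g^{-1})_{3-i}$ composed with a suitable translation. For $Q_8$ the extra automorphism comes from the non-DRR phenomenon of $Q_8$ through Proposition~\ref{prop=Haar}.

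I expect the main obstacle to be the uniform proof that $A$ fixes every part for all $m\ge 3$ and all two-generated $G$, including groups where $a$ or $b$ has order $2$ or $3$. In those groups, additional short cycles can appear and spoil the cycle-count invariant. I would first try to make the invariant depend only on the chord pattern, using only the arcs labelled $1$ so that it is independent of $G$. Only when that fails would I perturb the construction for the finitely many problematic $(m,G)$.
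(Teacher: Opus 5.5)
The paper gives no proof of this proposition; it is quoted from \cite{DKY}. So your proposal has to stand on its own, and as written it is a programme rather than a proof. Its overall shape (explicit valency-two $m$-Cayley digraphs plus Proposition~\ref{prop=OmSR2}) matches the paper's own valency-two constructions in Lemma~\ref{lem=m-HOR}, Cases~3--4. But the two things that make up the result are missing: a concrete connection matrix for every admissible $(m,G)$, and a proof that its automorphism group is $R(G)$. The pieces you do specify fail.

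\textbf{The construction.} Take $T_{1,2}=\{1,a\}$, $T_{2,3}=\{1,b\}$ and label-$1$ chords. The chords must then give a bijection from parts $3,\dots,m$ onto parts $1,4,\dots,m$ avoiding $i-1,i,i+1$ (mod $m$) and mutual pairs. This is impossible for $3\le m\le 6$; for $m=6$ it forces the digon $4\leftrightarrow 6$. When $G=\mathbb{Z}_1$ (or $b=1$, which you allow), the sets $\{1,a\},\{1,b\}$ collapse and valency two is lost. For $G=\mathbb{Z}_1$ the digraph \emph{is} its quotient, so you need an asymmetric $2$-in, $2$-out oriented graph on $m$ vertices for every $m\ge 7$. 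Your ``skip by $2$'' pattern gives a rotation-invariant circulant instead. These are infinite families (every two-generated group at each small $m$, and every $m\ge 7$ for the trivial group), so ``adjust by hand / check by computer'' cannot dispose of them.

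\textbf{The verification.} The steps are not valid as stated.
\begin{enumerate}
\item[(a)] $A=\Aut(\Gamma)$ is not yet known to permute the parts $G_i$ as blocks, so asymmetry of the weighted quotient does not help. If two parts have the same cycle counts, nothing prevents $A$ from mixing their vertices. You need invariants separating \emph{every} part, or an inductive peeling argument. In the latter, an $A$-invariant union of parts determines the next part through where out-neighbourhoods land. This is what Case~4 of Lemma~\ref{lem=m-HOR} does, starting from a directed $3$-cycle through $G_1,G_2,G_m$.
\item[(b)] ``Lies on a directed $m$-cycle of the skeleton'' is not an invariant, because $A$ does not see which arcs carry label $1$. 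In $\Gamma$ itself the arc to $a_{i+1}$ can lie on such a cycle via $b$-arcs and chords whenever a suitable positive word in $a,b$ is trivial. That depends on the relations of $G$, not just on $|a|$. The paper instead uses directed $3$-cycles together with $|a|\ge 3$, which is a genuine invariant.
\item[(c)] For $m=2$ both out-neighbours of every vertex lie in the same part, so this is the whole difficulty there. Proposition~\ref{prop=Haar} only covers automorphisms acting by the \emph{same} permutation on $G_1$ and $G_2$. You must first exclude swapping $G_1$ and $G_2$, and show that $A_{1_1}$ fixes a designated out-neighbour. Then $\{g_1,g_2\}$ forms a block system (with $1\in S$), as the paper does in its own $m=2$ cases. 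None of this is supplied for $|S|=|T|=2$.
\end{enumerate}

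\textbf{The exceptional cases.} Nothing needs to be proved in cases (2)--(4), since the statement is a disjunction. Your sketch there is also unsound. $Q_8$ having no DRR does not produce a common nontrivial element of $\Aut(\Cay(G,S))_1\cap\Aut(\Cay(G,T))_1$; Section~5 exhibits a $2$-POSR of $Q_8$. Also, $Q_8$ with $m=2$ gives $16$ vertices, not at most $12$.
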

	
To end this section, we turn to the analysis of groups with the aim of simplifying the proof of Theorems~\ref{theo=main} and~\ref{theo=main1}.
Let $m\geq2$ be an integer, let $G$ be an elementary abelian 2-group and let $\Cay(G,(T_{i,j})_{m\times m})$ be a regular oriented $m$-Cayley graph over $G$. Then $T_{i,j}\cap T_{j,i}^{-1}=\emptyset$ for all $i,j\in \{1,\cdots,m\}$. In particular, $T_{i,i}\cap T_{i,i}^{-1}=\emptyset$. Since $G$ is an elementary abelian 2-group, we have $T_{i,i}=\emptyset$ and so $\Cay(G,(T_{i,j})_{m\times m})$ is an $m$-Haar oriented graph over $G$. Therefore, O$m$SRs are $m$-POSRs for elementary abelian 2-groups. 
By \cite[Theorem 1.2]{DFB}, an elementary abelian 2-group has no $m$-POSR if and only if $m=2$, and $G\cong\mz_2^2$ or $\mz_2^3$. By \cite[Theorem 1.1]{DFB}, an elementary abelian 2-group has no $m$-HOR if and only if $m=2$, and $G\cong\mz_2^2$, $\mz_2^3$ or $\mz_2^4$; $2\leq m\leq 3$, and $G\cong\mz_2$; or $2\leq m\leq 6$ and $G\cong\mz_1$. Therefore, in the rest of this paper, we just need to consider non-elementary abelian $2$-groups.\smallskip
	
Note that a group is an elementary abelian $2$-group if and only if each minimal generating set consists of involutions. Let $G$ be a non-elementary abelian $2$-group with $d(G) = t$ and let $\{h_1, h_2,\cdots,h_t\}$ be a generating set of $G$ such that $|h_i|\geq |h_{i+1}|$ for each $1\leq i\leq t-1$. Then $|h_1|\geq 3$ as $G$
	is a non-elementary abelian $2$-group.
	
	\section{Groups with small generating set}
 In this section, we address the case where the group 
$G$ satisfies $d(G)\leq 3$.
	
	\begin{lemma}\label{lem=m-HOR}
		Let $m\geq 2$ be an integer and let $G$ be a non-elementary abelian $2$-group with $d(G)\leq 3$. Then $G$ has an $m$-HOR
		except $m=2$ and $G\cong Q_8$ or $\mz_n$ with $3\leq n\leq 5$.
	\end{lemma}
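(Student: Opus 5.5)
The plan is to split according to $d(G)$. If $d(G)\leq 2$, Proposition~\ref{prop=m-HOR} applies directly. Since $G$ is not an elementary abelian $2$-group, it is not $\mz_1$, $\mz_2$ or $\mz_2^2$. So the only exceptions left by that proposition are $m=2$ with $G\cong Q_8$ or $\mz_n$ for $3\leq n\leq 5$, which are exactly the exceptions in the statement. It therefore remains to treat $d(G)=3$, and there we must show that $G$ has an $m$-HOR for every $m\geq 2$.

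For $d(G)=3$, write $G=\langle h_1,h_2,h_3\rangle$ with $|h_1|\geq |h_2|\geq |h_3|$ and $|h_1|\geq 3$, as at the end of Section~2. I will give an explicit $m$-Haar oriented graph $\Gamma=\Cay(G,(T_{i,j})_{m\times m})$ with $T_{i,i}=\emptyset$. For $m=2$ I take, for example,
\[
T_{1,2}=\{1,h_1,h_2,h_3\},\qquad T_{2,1}=\{h_1^{-1}h_2^{-1}, \dots\},
\]
chosen so that $T_{1,2}\cap T_{2,1}^{-1}=\emptyset$, both sets have size $k$ (so $\Gamma$ is regular of valency $k$), and the sets are asymmetric enough that vertices are distinguishable. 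For $m\geq 3$ I use a ``cyclic'' pattern: $T_{i,i+1}=\{1\}$ for $1\leq i\leq m-1$, with the generators placed in a few entries such as $T_{m,1}$, $T_{1,3}$, $T_{2,1}$, and further entries added to balance in- and out-valencies. The sets are always kept disjoint from the inverses of the reverse entries, so that $\Gamma$ is oriented.

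The proof that $\Aut(\Gamma)=R(G)$ follows the route of Proposition~\ref{prop=OmSR2}. Let $A=\Aut(\Gamma)$. First, $A$ fixes each part $G_i$ setwise. I will argue this by counting local invariants: for instance, the number of directed $2$-paths or $3$-cycles through a vertex, or how its out-neighbourhood is distributed among the parts. The valencies are chosen so that these invariants differ between parts. Second, for a base vertex $u_i=1_i$, I check that $A_{u_i}$ fixes $\Gamma^+(u_i)$ pointwise. I will do this by showing that the out-neighbours of $1_i$ are pairwise distinguished by invariants invariant under $A_{1_i}$, such as the number of common out-neighbours with $1_i$, or arcs among the out-neighbours. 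Connectivity holds because $\langle h_1,h_2,h_3\rangle=G$ and $1\in T_{i,i+1}$. Proposition~\ref{prop=OmSR2} then gives $A=R(G)$.

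The main obstacle is the local distinguishing argument for $m=2$. Coincidences among small products of the $h_i$ (involutions, $h_2=h_1^{-1}h_3$-type relations, or $|h_1|=3,4$) can create extra symmetry. So I expect a case split according to whether $h_2,h_3$ are involutions and whether $|h_1|\in\{3,4\}$, with the connection sets adjusted in each case. Some of the small groups that arise, such as $\mz_4\times\mz_2^2$ or $\mz_3\times\mz_2^2$, may need to be checked directly, by computer or by hand.
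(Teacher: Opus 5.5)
\textbf{There is a genuine gap: the case $d(G)=3$, which is the real content of the lemma, is not proved.} Your reduction for $d(G)\le 2$ matches the paper's, but for $d(G)=3$ you never fix the digraph.
\begin{itemize}
\item $T_{2,1}$ is left as $\{h_1^{-1}h_2^{-1},\dots\}$.
\item The digraphs for $m\ge 3$ are only described as a pattern, with entries ``such as'' $T_{m,1},T_{1,3},T_{2,1}$ plus unspecified balancing entries.
\item Both key steps ($A$ stabilizes every $G_i$, and $A_{1_i}$ fixes $\G^+(1_i)$ pointwise) are promised via invariants to be chosen later. The case split and computer checks you anticipate are not carried out.
\end{itemize}
Whether $\Aut(\G)=R(G)$ holds depends entirely on the specific connection sets, so nothing here can be checked. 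Also, $\mz_3\times\mz_2^2\cong\mz_6\times\mz_2$ has $d=2$, so it cannot arise in this case.

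The missing idea is the one that removes your case split. By \cite[Lemma 2.6]{MorrisSpiga1}, either $G$ has a $3$-element generating set of elements of order at least $3$, or $G=H\rtimes\langle\tau\rangle$ is generalized dihedral. In the latter case $H$ is not an elementary abelian $2$-group, so it can be generated without involutions. Either way $G=\langle a,b,c\rangle$ with $|a|,|b|\ge 3$.

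The paper then takes $S=\{1,a,a^{-1},b\}$ and $T=\{b,c,ba,cb\}$; here $|b|\ge 3$ is exactly what gives $S\cap T^{-1}=\emptyset$. For $m=2$ the argument runs as follows.
\begin{itemize}
\item Counting common out-neighbours shows that $A$ stabilizes $G_1$ and $G_2$, and that $A_{1_1}$ fixes $1_2$ and $c_1$.
\item Hence $A_{g_1}=A_{g_2}$ and the pairs $\{g_1,g_2\}$ form blocks.
\item Proposition~\ref{prop=Haar} turns $\alpha\in A_{1_1}$ into $\overline{\alpha}\in\Aut(\Cay(G,S))_1\cap\Aut(\Cay(G,T))_1$.
\item Since $S\cap T=\{b\}$ and $S^2\cap T=\{b,ba\}$ (using $c\notin\langle a,b\rangle$), $\overline{\alpha}$ fixes $b$, $ba$ and $c$.
\end{itemize}
For $m=3$ a third part is attached to this $2$-HOR using $L=\{b,c,ab,a^{-1}b\}$. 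For $m=4$ and $m\ge 5$ the paper uses explicit valency-$2$ graphs and Proposition~\ref{prop=OmSR2}, with $|a|\ge 3$ ruling out a directed $3$-cycle through $1_m$ and $a_1$.

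\textbf{Your $d(G)\le 2$ argument is also one-directional.} Proposition~\ref{prop=m-HOR} is only a disjunction: a group on its list might still have a $2$-HOR of valency larger than two. The lemma feeds into the classification in Theorem~\ref{theo=main}, so you also need that $Q_8$, $\mz_3$, $\mz_4$ and $\mz_5$ really have no $2$-HOR. The paper settles this by a Magma computation; your proposal omits it.
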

	
	\begin{proof}
		Let $G$ be a finite group with $d(G)\leq 2$. Suppose $G$ has no $m$-HOR.
		Proposition~\ref{prop=m-HOR} implies that $m=2$ and $G$ is one of the following groups $Q_8$, $\mz_3$, $\mz_4$ or $\mz_5$.
	 By Magma~\cite{magma},
		$Q_8$, $\mz_3$, $\mz_4$ and $\mz_5$ has no $2$-HOR.\smallskip

Let $G$ be a finite group with $d(G)=3$. By~\cite[Lemma 2.6]{MorrisSpiga1}, $G$ is a generalized dihedral group or $G=\left\langle h_1,h_2,h_3\right\rangle$ with $|h_1|\geq |h_2|\geq |h_3|\geq 3$. If $G$ is a generalized dihedral group, then $G=\left\langle H,\tau\right\rangle$ with $H$ is abelian, $|\tau|=2$ and $h^{\tau}=h^{-1}$ for each $h\in H$. Since $G$ is a non-elementary abelian 2-group, we have that $H$ has a generating set without involution. Therefore, for both cases, $G$ has a generating set $\{a,b,c\}$ such that $|a|\geq |b|\geq |c|$ with $|a|\geq |b|\geq 3$. Let 
$$S=\{1,a,a^{-1},b\},~T=\{b,c,ba,cb\}{~~\rm and ~}~L=\{b,c,ab,a^{-1}b\}$$ 
be three subsets of $G$. It is easy to see that $S\cap T^{-1}=S\cap L^{-1}=\emptyset$. In the following, we divide the proof into four cases according to the value of $m$.
		
\medskip
\f {\bf  Case 1: $m=2$}
\medskip
		
Let $\G_2=\Cay(G,(T_{i,j})_{2\times 2})$ with $T_{1,2}=S$ and $T_{2,1}=T$. Then $\G_2$ is a connected $2$-Haar oriented graph of $G$ with valency $4$. Let $A=\Aut(\G_2)$. By the construction of $\G_2$, we have
		$$\G_2^+(1_1)=\{1_2,a_2,(a^{-1})_2,b_2\} {\rm ~and~} \G_2^+(1_2)=\{b_1,c_1,(ba)_1,(cb)_1\}.$$
Therefore,
		\begin{align*}
			\G_2^+(a_2)&=\{(ba)_1,(ca)_1,(ba^2)_1,(cba)_1\},\\
			 \G_2^+((a^{-1})_2)&=\{(ba^{-1})_1,(ca^{-1})_1,b_1,(cba^{-1})_1\},\\
			\G_2^+(b_2)&=\{(b^2)_1,(cb)_1,(bab)_1,(cb^2)_1\},\\
			\G_2^+(b_1)&=\{b_2,(ab)_2,(a^{-1}b)_2,(b^2)_2\},\\
			\G_2^+(c_1)&=\{c_2,(ac)_2,(a^{-1}c)_2,(bc)_2\},\\
			\G_2^+((ba)_1)&=\{(ba)_2,(aba)_2,(a^{-1}ba)_2,(b^2a)_2\}, {\rm and }\\
			\G_2^+((cb)_1)&=\{(cb)_2,(acb)_2,(a^{-1}cb)_2,(bcb)_2\}.
		\end{align*}
		It is easy to see that $\G_2^+(1_2)\cap \G_2^+(a_2)=\{(ba)_1\}$,
		$\G_2^+(1_2)\cap \G_2^+((a^{-1})_2)=\{b_1\}$, $\G_2^+(1_2)\cap \G_2^+(b_2)=\{(cb)_1\}$ and $\G_2^+(b_2)\cap \G_2^+((a^{-1})_2)=\G_2^+(b_2)\cap \G_2^+(a_2)=\emptyset$.
		It implies that $1_2$ is the unique out-neighbor of $1_1$ that has a common out-neighbor with each vertex $u\in \G_2^+(1_1)$.
		Therefore, $A_{1_1}$ fixes $1_2$. Moreover, $A_{1_1}$ fixes $c_1$ as it is the unique out-neighbor of $1_2$ which is not an out-neighbor of other vertex
		in $\G_2^+(1_1)$.
		For $\G_2^+(1_2)$, we have $\G_2^+(b_1)\cap \G_2^+(c_1)=\G_2^+((ba)_1)\cap \G_2^+((cb)_1)=\emptyset$, that is, there is no vertex $u$
		in $\G_2^+(1_2)$ such that $u$ has a common out-neighbor with each vertex $v\in \G_2^+(1_2)$.
		 Therefore, $A$ stabilizes $G_1$ and $G_2$ respectively.\smallskip

		Recall taht $R(G)\leq A$. Then $A$ has exactly two orbits $G_1$ and $G_2$ and hence $A=R(G)A_{1_1}=R(G)A_{1_2}$. Since $G$ is semiregular and $A_{1_1}\leq  A_{1_2}$, $A_{1_2}=A_{1_2}\cap A=A_{1_2}\cap R(G)A_{1_1}=A_{1_1}$ holds and so $A_{g_1}=A_{g_2}$ for each $g\in G$. We will prove $\{g_1,g_2\}$ is a block of $A$ acting on $V(\G_2)$ for each $g\in G$. Let $\sigma\in A$. Suppose $\{g_1,g_2\}^{\sigma}\cap \{g_1,g_2\}\not=\emptyset$. Since $A_{g_1}=A_{g_2}$ and $A$ stabilizes $G_i~(i=1,2)$,
		we have
		$$g_1^{\sigma}=g_1 \mbox{~and ~}g_2^{\sigma}=g_2.$$
		This means that $\{\{g_1,g_2\}\ |\ g\in G\}$ is a block system of $A$ on $V(\G_2)$, and hence there is a permutation $\overline{\sigma}$ of $G$ satisfying $(g^{\overline{\sigma}})_1=(g_1)^{\sigma}$ and $(g^{\overline{\sigma}})_2=(g_2)^{\sigma}$ for each $g\in G$. By Proposition~\ref{prop=Haar}, $\overline{\sigma}\in \Aut(\Cay(G,S))\cap \Aut(\Cay(G,T))$. \smallskip
		
		Now, we are in the position to prove $A_{1_1}=1$. Let $\a\in A_{1_1}$. Then there exists $\overline{\a}\in $ $\Aut(\Cay(G,S))_1$ $\cap \Aut(\Cay(G,T))_1$ by the paragraph above. It implies that $S^{\overline{\a}}=S$ and $T^{\overline{\a}}=T$.
In particular, $\{b\}^{\overline{\a}}=(S\cap T)^{\overline{\a}}=S\cap T=\{b\}$, that is, $\overline{\a}$ fixes $b$.
		 Then  $\{b,ba\}^{\overline{\a}}=(S^2\cap T)^{\overline{\a}}=S^2\cap T=\{b,ba\}$, which implies $\overline{\a}$ fixes $ba$.
		Recall that $\a \in A_{1_1}$ fixes $c_1$. It means that $\overline{\a}$ fixes $c$.
		Thus, ${\overline{\a}}$ fixes $b,c,ba$ pointwise, and so ${\overline{\a}}$ fixes $\left\langle b,c,ba\right\rangle =G$ pointwise. Thus, $\overline{\a}=\a=1$.
		It follows that $A_{1_1}=1$ and so $A=R(G)$. Therefore, $\G_2$ is a $2$-HOR of $G$.		
	\medskip

		\f {\bf Case 2: $m=3$}
		\medskip
		
		Let $\G_3=\Cay(G,(T_{i,j})_{3\times 3})$ with $T_{1,2}=T_{3,1}=T_{3,2}=S$, $T_{2,1}=T_{2,3}=T$ and $T_{1,3}=L$. Since $S\cap T^{-1}=S\cap L^{-1}=\emptyset$, we have that $\G_3$ is a connected $3$-Haar oriented graph of $G$ with valency $8$. Let $A=\Aut(\G_3)$.
		By the construction of $\G_3$, we have
		\begin{align*}
			\G_3^+(1_1)&=S\times \{2\}\cup L\times \{3\}=\{1_2,a_2,(a^{-1})_2,b_2,b_3,c_3,(ab)_3,(a^{-1}b)_3\},\\
			\G_3^+(1_2)&=T\times \{1\}\cup T\times \{3\}=\{b_1,c_1,(ba)_1,(cb)_1,b_3,c_3,(ba)_3,(cb)_3\}, {\rm and }\\
			\G_3^+(1_3)&=S\times \{1\}\cup S\times \{2\}=\{1_1,a_1,a^{-1}_1,b_1,1_2,a_2,a^{-1}_2,b_2\}.
		\end{align*}
	Now, we consider the sub-digraphs induced by $\G_3(1_i)$ with $i\in \{1,2,3\}$.	With $[\G_3^{+}(1_1)]$, $[\G_3^{+}(1_2)]$ and $[\G_3^{+}(1_3)]$ depicted in Figure~\ref{Fig1} (where the dished arcs means possible arcs), it is straightforward to observe that $[\G_3^{+}(1_3)]\ncong [\G_3^{+}(1_i)]$ for $i\in\{1,2\}$ as $1_1$ has out-valency $4$ in $[\G_3^{+}(1_3)]$ and every vertex has out-valency at most 3 in $[\G_3^{+}(1_1)]$ and $[\G_3^{+}(1_2)]$. Consequently, $A$ stabilizes $G_3$, and so stabilizes $G_1\cup G_2$.
		Since $[G_1\cup G_2]\cong \G_2$
		is a $2$-HOR of $G$, $A_{1_1}$ fixes $G_1\cup G_2$ pointwise. In particular, $A$ stabilizes $G_i$ for each $i\in\{1,2,3\}$.
		Now, to prove $\G_3$ is a $3$-HOR of $G$, we just need to prove $A_{1_1}=1$, that is, $A_{1_1}$ fixes $g_i$ pointwise
		for each $g\in G$ and $i\in \{1,2,3\}$. If there exists $\alpha\in A_{1_1}$ such that $(g_3)^{\a}=h_3$ for some $g,h\in G$, then $\a$ maps $\G_3^+(g_3)=(Sg)_1\cup (Sg)_2$ to $\G_3^+(h_3)=(Sh)_1\cup (Sh)_2$. Since $A_{1_1}$
		fixes $G_1\cup G_2$ pointwise, we have that $g_3=h_3$, and so $\alpha$ fixes $G_3$ pointwise.
		Thus, $A_{1_1}$ fixes $G_3$ pointwise. Therefore, $A_{1_1}$
		fixes $g_i$ for each $g\in G$ and $i\in \{1,2,3\}$, and hence $\G_3$ is a $3$-HOR of $G$.

		\begin{figure}[!ht]
			\vspace{1mm}
		
							\vspace{-2mm}
			\begin{tikzpicture}[node distance=0.7cm,thick,scale=0.6,every node/.style={transform shape},scale=1,
				midarrow/.style={decoration={markings, mark=at position 0.5 with {\arrow{Stealth}}}, postaction={decorate}}
				]			
				
				\node[circle](A0){};
				\node[below=of A0, circle,draw, inner sep=1pt, label=left:{\Large$1_2$}](A1){};
				\node[below=of A1](A11){};
				\node[below=of A11, circle,draw, inner sep=1pt, label=left:{\Large$a_2$}](A2){};
				\node[below=of A2](A21){};
				\node[below=of A21, circle,draw, inner sep=1pt, label=left:{\Large$(a^{-1})_2$}](A3){};
				\node[below=of A3](A31){};
				\node[below=of A31, circle,draw, inner sep=1pt, label=left:{\Large$b_2$}](A4){};
				\node[right=of A1](A00){};
				\node[right=of A00](A000){};
				\node[right=of A000](A0000){};
				\node[right=of A0000, circle,draw, inner sep=1pt, label=right:{\Large$b_3$}](B1){};
				\node[below=of B1](B10){};
				\node[below=of B10, circle,draw, inner sep=1pt, label=right:{\Large$c_3$}](B2){};
				\node[below=of B2](B21){};
				\node[below=of B21, circle,draw, inner sep=1pt, label=right:{\Large$(ab)_3$}](B3){};
				\node[below=of B3](B31){};
				\node[below=of B31, circle,draw, inner sep=1pt, label=right:{\Large$(a^{-1}b)_3$}](B4){};
				\node[right=of A4](A40){};
				\node[below=of A40](A41){};
				\node[right=of A41,label=below:{\LARGE$[\G_3^{+}(1_1)]$}]{};
				\draw[midarrow] (A1) -- (B1);
				\draw[midarrow] (A1) -- (B2);
				\draw[dashed,midarrow] (A1) -- (B3);
				\draw[dashed,midarrow](A2) -- (B3);
				\draw[midarrow] (A3) -- (B1);
				\draw[dashed,midarrow] (A3) -- (B4);
						
				\draw[midarrow] (B1) -- (A4);
					\draw[dashed,midarrow] (B3) -- (A3);
				\draw[midarrow] (B3) -- (A4);
				\draw[midarrow] (B4) -- (A4);
				
				\node[right=of B1](B11){};
				\node[right=of B11](B111){};
				\node[right=of B111](B1111){};
				\node[right=of B1111](B11111){};
				\node[right=of B11111](B111111){};
				\node[right=of B111111, circle,draw, inner sep=1pt, label=left:{\Large$b_1$}](C1){};
				\node[below=of C1](C10){};
				\node[below=of C10, circle,draw, inner sep=1pt, label=left:{\Large$c_1$}](C2){};
				\node[below=of C2](C21){};
				\node[below=of C21, circle,draw, inner sep=1pt, label=left:{\Large$(ba)_1$}](C3){};
				\node[below=of C3](C31){};
				\node[below=of C31, circle,draw, inner sep=1pt, label=left:{\Large$(cb)_1$}](C4){};
				\node[right=of C1](C00){};
				\node[right=of C00](C000){};
				\node[right=of C000](C0000){};
				\node[right=of C0000, circle,draw, inner sep=1pt, label=right:{\Large$b_3$}](D1){};
				\node[below=of D1](D11){};
				\node[below=of D11, circle,draw, inner sep=1pt, label=right:{\Large$c_3$}](D2){};
				\node[below=of D2](D21){};
				\node[below=of D21, circle,draw, inner sep=1pt, label=right:{\Large$(ba)_3$}](D3){};
				\node[below=of D3](D31){};
				\node[below=of D31, circle,draw, inner sep=1pt, label=right:{\Large$(cb)_3$}](D4){};
				\node[right=of C4](C40){};
				\node[below=of C40](C41){};
				\node[right=of C41,label=below:{\LARGE$[\G_3^{+}(1_2)]$}]{};
				\draw[dashed,midarrow] (C1) -- (D3);
				\draw[midarrow] (C1) -- (D4);
				\draw[dashed,midarrow] (C2) -- (D4);
				\draw[dashed,midarrow] (C3) -- (D1);
						\draw[dashed,midarrow] (C4) -- (D2);
				\draw[dashed,midarrow](C4) -- (D1);
				\draw [midarrow](D1) -- (C1);
				\draw [midarrow](D2) -- (C2);
				\draw [midarrow](D3) -- (C3);
				\draw[midarrow] (D4) -- (C4);
				\node[right=of D1](D11){};
				\node[right=of D11](D111){};
				\node[right=of D111](D1111){};
				\node[right=of D111](D1111){};
				\node[right=of D1111](D11111){};
				\node[right=of D11111](D111111){};
				\node[right=of D111111](D1111111){};
				\node[right=of D1111111, circle,draw, inner sep=1pt, label=left:{\Large$1_1$}](E1){};
				\node[below=of E1](E11){};
				\node[below=of E11, circle,draw, inner sep=1pt, label=left:{\Large$a_1$}](E2){};
				\node[below=of E2](E21){};
				\node[below=of E21, circle,draw, inner sep=1pt, label=left:{\Large$(a^{-1})_1$}](E3){};
				\node[below=of E3](E31){};
				\node[below=of E31, circle,draw, inner sep=1pt, label=left:{\Large$b_1$}](E4){};
				\node[right=of E1](E0){};
				\node[right=of E0](E00){};
				\node[right=of E00](E000){};
				\node[right=of E000](E0000){};
				\node[right=of E0000](E00000){};
				\node[right=of E000, circle,draw, inner sep=1pt, label=right:{\Large$1_2$}](F1){};
				\node[below=of F1](F11){};
				\node[below=of F11, circle,draw, inner sep=1pt, label=right:{\Large$a_2$}](F2){};
				\node[below=of F2](F21){};
				\node[below=of F21, circle,draw, inner sep=1pt, label=right:{\Large$(a^{-1})_2$}](F3){};
				\node[below=of F3](F31){};
				\node[below=of F31, circle,draw, inner sep=1pt, label=right:{\Large$b_2$}](F4){};
				\node[right=of E4](E40){};
				\node[below=of E40](E41){};
				\node[right=of E41,label=below:{\LARGE$[\G_3^{+}(1_3)]$}]{};
				\draw[midarrow] (E1) -- (F1);
				\draw[midarrow] (E1) -- (F2);
				\draw[midarrow] (E1) -- (F3);
				\draw[midarrow] (E1) -- (F4);
				\draw[midarrow] (E2) -- (F1);
				\draw[midarrow] (E2) -- (F2);
				\draw[midarrow](E3) -- (F1);
				\draw[midarrow] (E3) -- (F3);
				\draw[midarrow] (E4) -- (F4);
				\draw[midarrow] (F1) -- (E4);
				\draw [midarrow](F3) -- (E4);
			\end{tikzpicture}
	\caption{\small{The induced sub-digraphs  $[\G_3^{+}(1_i)]$ with $i\in\{1,2,3\}$}}\label{Fig1}
			\vspace{2mm}
		\end{figure}
		
\begin{figure}

					\vspace{-2mm}
	\centering
	\includegraphics[width=5cm]{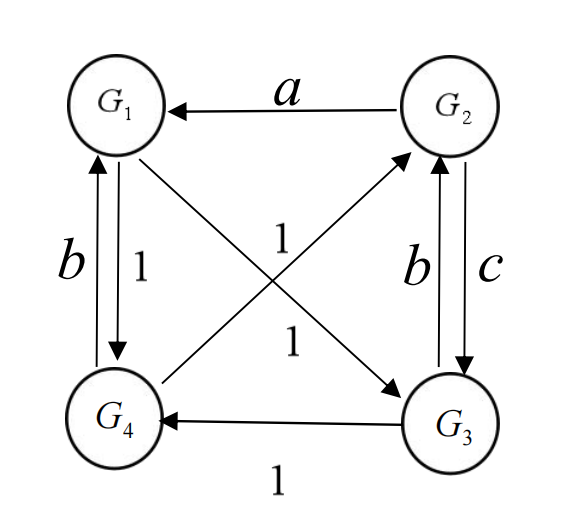}	 \includegraphics[width=8cm]{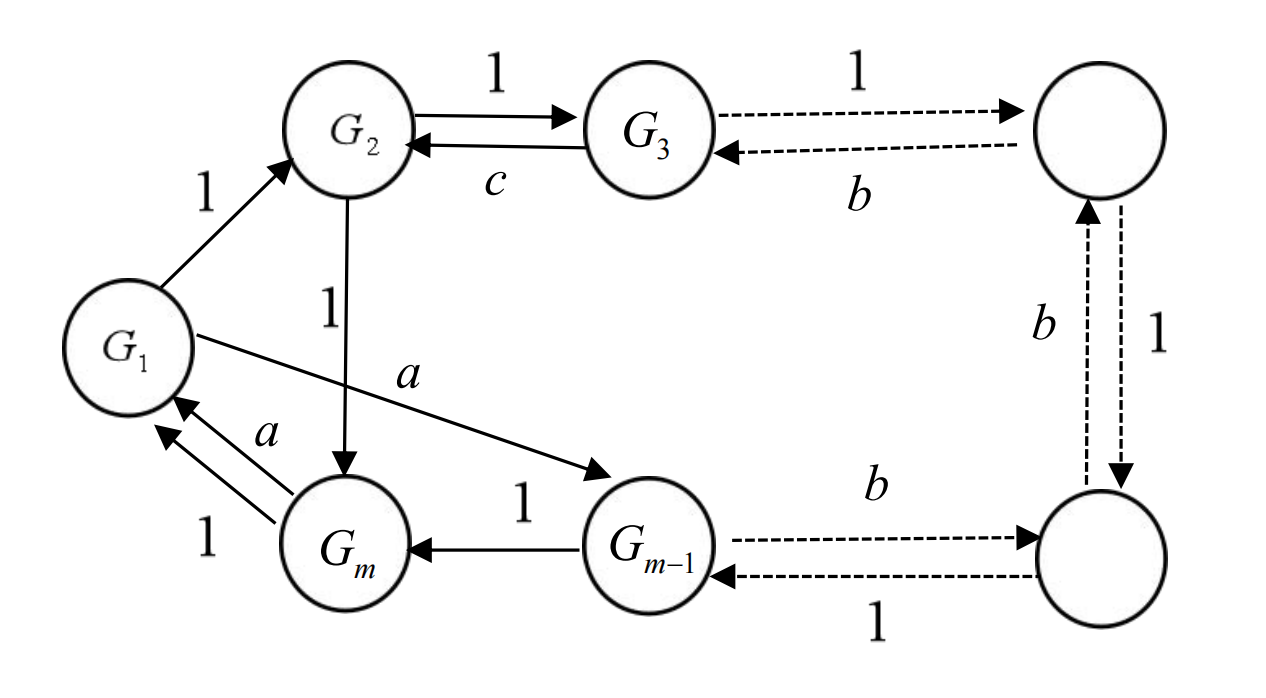}
	\caption{\small{ The $4$-Haar oriented graph and the $m$-Haar oriented graph $\G_m$~with $m\geq 5$}}\label{Fig2}
	\vspace{-0mm}
\end{figure}
 		
		\medskip
		\f {\bf Case 3: $m=4$}
		\medskip	
		
		Let $\G_4=\Cay(G,(T_{i,j})_{4\times 4})$ with $T_{1,4}=T_{1,3}=T_{3,4}=T_{4,2}=\{1\}$, $T_{2,1}=\{a\}$, $T_{3,2}=T_{4,1}=\{b\}$,
		$T_{2,3}=\{c\}$ and $T_{i,j}=\emptyset$ otherwise. Then $\G_4$ is a connected $4$-Haar oriented  graph of $G$ with valency $2$. The left graph in Figure~\ref{Fig2} might be of some help for understanding the structure of $\G_4$. Let $A=\Aut(\G_4)$.
		By the construction of $\G_4$, we have
		$$\G_4^{+}(1_1)=\{1_3,1_4\},~\G_4^{+}(1_2)=\{a_1,c_3\},~ \G_4^{+}(1_3)=\{1_4,b_2\}~{\rm and}~\G_4^{+}(1_4)=\{b_1,1_2\}.$$
		It is easy to see that the induced sub-digraph $[\G_4^{+}(1_1)]$ is the arc $(1_3,1_4)$, and $[\G_4^{+}(1_i)]$ is an union of two isolated vertices for each $i\in \{2,3,4\}$. It follows that $A$ stabilizes $G_1$, and so $A$ stabilizes $G_2\cup G_3\cup G_4$. Note that, for each $g\in G$,  $g_1$ has only in-neighbor in $G_2$, has only out-neighbor in $G_3$, and has both in- and out-neighbor in $G_4$. Then $A$ stabilizes $G_i$ for each $i\in\{1,\cdots,4\}$, that is, $A$ has $4$ orbits on $V(\G_4)$. Since the two out-neighbors of $1_i$ are in different orbits of $A$, we have that $A_{1_i}$ fixes $\G_4^{+}(1_i)$ pointwise for each $i\in \{1,\cdots,4\}$. By Proposition~\ref{prop=OmSR2}, $A=R(G)$ and so $\G_4$ is a $4$-HOR of $G$.

\medskip
\f {\bf Case 4: $m\geq5$}
\medskip	
		
		Let $\G_m=\Cay(G,(T_{i,j})_{m\times m})$ with
		\begin{eqnarray*}
			T_{i,i+1}&=&\{1\}\,\,\,\,\,\,\,\,\,\,\,\,\,\,\,\,\,\,\mbox{ for } i\in\mz_m;\\
			 T_{m,1}=T_{1,m-1}=\{a\},\,\,\,\,\,T_{2,m}=\{1\},\,\,T_{3,2}=\{c\},\,\,\,\, T_{i,i-1}&=&\{b\}\,\,\,\,\,\,\,\,\,\,\,\,\,\,\,\,\,\,\mbox{ for } i\not=1,2,3,m; \\
			 T_{i,j}&=&\emptyset\,\,\,\,\,\,\,\,\,\,\,\,\,\,\,\,\,\,\,\,\,\,\,\,\mbox{ otherwise.}
		\end{eqnarray*}
Then $\G_m$ is a connected $m$-Haar oriented graph of $G$ with valency $2$. The right graph in Figure~\ref{Fig2} might be of some help for understanding the structure of $\G_m$. Let $A=\Aut(\G_m)$. By the definition of $\G_m$, we have that $(g_1,g_2,g_{m})$ is a directed $3$-cycle for each $g\in G$. Again, by the definition of $\G_m$, there is no directed $3$-cycle through $g_i$ for each $i\in \{3,\cdots, m-1\}$ and $g\in G$.
So $A$ stabilizes $G_1\cup G_2\cup G_{m}$. Since both $g_1$ and $g_2$  have out-valency $1$, and $g_{m}$ has out-valency $2$ in  the induced sub-digraph $[G_1\cup G_2\cup G_{m}]$ for each $g\in G$, we have that $A$ stabilizes $G_{m}$.
Since all out-neighbors of $g_{m}$ are in $G_1$, $A$ stabilizes $G_1$ and so $A$ stabilizes $G_2$.
Recall that $T_{2,3}=\{1\}$ and $T_{2,j}=\emptyset$ for each $j\neq 3,m$. So $A$ stabilizes $G_3$. Repeating this step, $A$ stabilizes $G_{j}$ setwise for each $j\in \{4,\cdots,m-1\}$. It follows that $A$ stabilizes $G_i$  for each $i\in \{1,\cdots,m\}$.
Since $R(G)\leq A$ is transitive on $G_i$, $A$ exactly has $m$ orbits on $V(\G_m)$, that is, $G_i$ with $i\in\{1,\cdots,m\}$. \vskip0.1cm
	
For any $i \in \{1,\cdots,m-1\}$,  the two out-neighbors of $1_i$ belong to different orbits. So  $A_{1_i}$ fixes $\G_m^+(1_i)$ pointwise.
Note that $\G_m^+(1_{m})=\{1_1,a_{1}\}$ and $(1_{m},1_1,1_2)$ is a directed $3$-cycle. Since $|a|\geq 3$, there is no directed 3-cycle containing both $1_{m}$ and $a_1$. Thus, $A_{1_{m}}$ fixes $\G_m^+(1_{m})$ pointwise. By Proposition~\ref{prop=OmSR2}, $A=R(G)$ and hence $\G_m$ is an $m$-HOR of $G$. This completes the proof.
	\end{proof}
		
	\section{Groups with large generating set}

Throughout this section, let $G$ be a group with $d(G) = t \geq  4$. Let $\{h_1, h_2,\cdots,h_t\}$ be a generating set of $G$ such that $|h_1|\geq 3$. Let $\overline{h_1}=h_1$ and $\overline{h_i}=h_1\cdots h_i$  with $i\in \{2,\cdots,t\}$. Denote	$$S=\{1,h_i\mid 1\leq i\leq t\} {\rm ~and~} T=\{h_1h_t, \overline{h_i}\mid 1\leq i\leq t\}.$$
It is easy to see that $|S|=|T|=t+1$ and $S\cap T^{-1}=\emptyset$.

\begin{lemma}\label{lem=m-HOR1}
	Let $m\geq 2$ be an integer and let $G$ be a non-elementary abelian $2$-group with $d(G)\geq4$. Then
	$G$ has an $m$-HOR.
\end{lemma}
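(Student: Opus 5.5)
The plan follows the template of Lemma~\ref{lem=m-HOR}. Cases $m\ge 4$ are handled verbatim by that proof's Cases 3 and 4, since those constructions only need three elements $a,b,c$ with $|a|\ge 3$ and $G=\langle a,b,c\rangle$ up to the extra generators. Precisely, I will modify the valency-$2$ constructions $\G_4$ and $\G_m$ so that they are connected with respect to all of $G$. For $m\ge 5$ there are enough parts to place $h_1,\dots,h_t$ as labels on the single-element connection sets, e.g.\ $T_{m,1}=\{h_1\}$ and $T_{3,2}=\{h_2\}$. If $t$ exceeds the number of available slots, I will instead take $T_{i,i-1}$ to be a product such as $h_2\cdots$, or simply enlarge one connection set while keeping regularity. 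The cleaner route, which I intend to use, is to build the $2$-partite graph $\G_2=\Cay(G,(T_{i,j})_{2\times 2})$ with $T_{1,2}=S$, $T_{2,1}=T$, prove it is a $2$-HOR, and then bootstrap to all $m\ge 3$ exactly as in Case 2. For $m=3$ I add $G_3$ with $T_{3,1}=T_{3,2}=S$, $T_{2,3}=T$, $T_{1,3}=L$ for a suitable $L$ with $S\cap L^{-1}=\emptyset$, and distinguish $G_3$ by an out-valency count inside $[\G_3^+(1_3)]$. For $m\ge 4$ I use the same idea, or the valency-$2$ constructions with generators adjoined.

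For $\G_2$, the argument follows Case 1. Write $\G_2^+(1_1)=S_2$ and $\G_2^+(1_2)=T_1$, and compute the out-neighbourhoods $\G_2^+((h_i)_2)=(Th_i)_1$. The elements $\overline{h_i}$ are designed so that common out-neighbours are controlled: $Th_1\cap T\ni \overline{h_1}h_1$? I expect instead the intended coincidences $\overline{h_{i-1}}h_i=\overline{h_i}$ and $h_1h_t$, which give $T\cap Th_i\neq\emptyset$ for each $i$. So $1_2$ should be the unique vertex of $S_2$ sharing an out-neighbour with every other vertex of $S_2$. This forces $A_{1_1}$ to fix $1_2$ and $A$ to stabilise $G_1$ and $G_2$ (because no vertex of $T_1$ has this property). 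The same semiregularity argument as in Case 1 then shows $A_{g_1}=A_{g_2}$ and that $\{g_1,g_2\}$ are blocks. Proposition~\ref{prop=Haar} turns any $\a\in A_{1_1}$ into $\overline{\a}\in\Aut(\Cay(G,S))_1\cap\Aut(\Cay(G,T))_1$.

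Finally, show that $\overline{\a}$ fixes a generating set. The sets $S\cap T$, $S^2\cap T$ and the ``chain'' structure $\overline{h_{i}}=\overline{h_{i-1}}h_i$ let us peel off elements one by one. $\overline{\a}$ preserves $T$ and the $S$-arcs among elements of $T$, namely $\overline{h_{i-1}}\to\overline{h_i}$ (an arc of $\Cay(G,S)$ in the right-multiplication convention). The Cayley digraph $\Cay(G,S)$ restricted to $T$ should therefore be a path $\overline{h_1}\to\overline{h_2}\to\cdots\to\overline{h_t}$ together with the extra element $h_1h_t$ hanging off $h_1$. This digraph is rigid, so $\overline{\a}$ fixes every $\overline{h_i}$, hence every $h_i=\overline{h_{i-1}}^{-1}\overline{h_i}$ by induction, and hence all of $G$.

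The main obstacle is the case analysis guaranteeing there are no accidental coincidences. Because the $h_i$ are arbitrary minimal generators, equalities like $h_ih_j=\overline{h_k}$ or $h_1h_t=\overline{h_2}$ could create extra common out-neighbours or extra arcs inside $T$ and break the uniqueness arguments. I will handle this using minimality of the generating set ($t\ge 4$ means no $h_i$ lies in the subgroup generated by the others, which kills most relations such as $h_1h_t\in\{\overline{h_i}\}$ for $i\ge 2$) and $|h_1|\ge 3$ (to separate $h_1$ from $h_1^{-1}$). Any residual small configurations will be checked directly.
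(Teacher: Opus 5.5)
\textbf{There is a genuine gap: the case $m\ge 4$ is never constructed, and the routes you sketch for it cannot work.}

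Cases~3 and~4 of Lemma~\ref{lem=m-HOR} cannot be reused. Their connection sets involve only $1,a,b,c$, and $d(G)\ge 4$ forces $\langle a,b,c\rangle\ne G$, so those valency-$2$ digraphs are disconnected. Proposition~\ref{prop=OmSR2} then does not apply, and a disconnected $m$-Haar oriented graph is never an $m$-HOR, since automorphisms can be chosen independently on different components.

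Relabelling cannot save valency $2$ either. Such a digraph is a voltage lift of a base digraph on $m$ vertices with $2m$ arcs, whose cycle space has rank $m+1$. So the vertices of $G_1$ in the component of $1_1$ form $H\times\{1\}$ for a subgroup $H$ generated by at most $m+1$ elements. Once $t>m+1$, no valency-$2$ $m$-Haar digraph of $G$ is connected.

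``Enlarge one connection set while keeping regularity'' and ``the same idea'' are not constructions. Each new part must keep the digraph regular and oriented, the parts must be shown to be $\Aut$-invariant, and $A_{1_1}=1$ must be propagated part by part. This is where the paper does most of its work. Keeping $\G_2$ on $G_1\cup G_2$, it builds a valency-$(t+3)$ family for even $m$: ladders $T_{i,i+2}=\{1\}$, $T_{i,i-2}=\{h_1\}$ plus blocks $M,N,K,Z$, with $G_1,G_2$ recognised from the arcs of $[\G_m^+(1_i)]$. For odd $m$ it builds a valency-$(t+2)$ family with blocks $E,F$, where directed $3$-cycles through $G_{m-2}\cup G_{m-1}\cup G_m$ anchor the induction. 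Your $m=3$ sketch is plausible, since it mirrors Case~2 of Lemma~\ref{lem=m-HOR}, but $L$ and the out-valency comparison still have to be supplied.

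\textbf{Within $m=2$, the final rigidity step mixes conventions.} In the paper arcs are $(g_i,(tg)_j)$, so Proposition~\ref{prop=Haar} gives $\overline{\a}\in\Aut(\Cay(G,S))_1$ for arcs $g\to sg$. There, $\overline{h_{i-1}}\to\overline{h_i}$ is an arc only if $\overline{h_{i-1}}\,h_i\,\overline{h_{i-1}}^{-1}\in S$, which need not hold for non-abelian $G$, so the rigid path on $T$ is unavailable. If you instead use arcs $g\to gs$ throughout, your computation $\overline{h_i}=\overline{h_{i-1}}h_i\in T\cap Th_i$ breaks.

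The paper uses a convention-free invariant. Since $\overline{\a}$ fixes $1$, it preserves every ball $S^i$ and every layer $(S^i\setminus S^{i-1})\cap T$. By irredundancy of $\{h_1,\dots,h_t\}$ these layers are $\{h_1\}$, $\{h_1h_2,h_1h_t\}$ and $\{\overline{h_i}\}$ for $3\le i\le t$.

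To pass from fixed $\overline{h_{i-1}},\overline{h_i}$ to fixed $h_i=\overline{h_{i-1}}^{-1}\overline{h_i}$, you also need a subgroup fact. Let $B=\Aut(\Cay(G,S))\cap\Aut(\Cay(G,T))$ and let $F$ be the set of points fixed by all of $B_1$. For $g\in F$, $B_1\le B_g=R(g)^{-1}B_1R(g)$ forces $B_g=B_1$, hence $Fg=F$, so $F$ is a subgroup. Then $h_t\in F$ (this uses $t\ge 4$), so $h_1h_t\in F$, so $h_1h_2\in F$, and all $h_i\in F$.

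Finally, $T\cap Th_1=\emptyset$, so $1_2$ shares no out-neighbour with $(h_1)_2$, and your characterisation of $1_2$ is false as stated. A bare count of partners will not repair it when $t=4$. For abelian $G$, the vertex $(h_1h_2)_1$ also shares out-neighbours with exactly three others, namely $(h_1)_1$, $(h_1h_2h_3)_1$ and $(h_1h_t)_1$, so finer structure is needed.
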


\begin{proof}
	Let $m\geq 2$ be an integer. In the following, we divide the proof into four cases according to the value of $m$.
	
	\medskip
\f	{\bf Case 1:} $m=2$
		\medskip
		
	Let  $\G_2=\Cay(G,(T_{i,j})_{2\times 2})$ with $T_{1,2}=S$ and $T_{2,1}=T$. Then $\G_2$ is a connected $2$-Haar oriented graph of $G$ with valency $t+1$. Let $A=\Aut(\G_2)$.
By the construction of $\G_2$, we have
$$\G_2^+(1_1)=S\times \{2\}=\{1,h_i\mid 1\leq i\leq t\}\times \{2\} {\rm ~and~} \G_2^+(1_2)=T\times \{1\}=
\{h_1h_t,\overline{h_i}\mid 1\leq i\leq t \}\times \{1\}.$$

\medskip
\f{\bf Claim:} $A$ stabilizes $G_1$ and $G_2$ respectively.
\medskip

To prove this claim, we will consider the induced sub-digraphs $[\G_2^+(1_i)]$ with $i\in \{1,2\}$. At first, we consider $[\G_2^+(1_1)]$. For each $i\geq 2$,  $\G_2^{+}(h_i,2)=(Th_i)\times\{1\}=\{h_1h_th_i,\overline{h_1}h_i,\cdots, \overline{h_t}h_i\}\times \{1\}$. In particular,
 $(\overline{h_i},1)=(\overline{h_{i-1}}h_i,1) \in  \G_2^{+}(h_i,2)$ for each $i\in \{2,\cdots,t\}$. Thus, $(\overline{h_i},1) $ is the common out-neighbor of $1_2$ and $(h_i,2)$ for each $i\in \{2,\cdots,t\}$.
For $i=1$,  $\G_2^{+}(h_1,2)=(Th_1)\times\{1\}=\{h_1h_th_1,h_1^2,\overline{h_2}h_1,\cdots,\overline{h_t}h_1\}$. It follows that $\G_2^{+}(1_2)\cap\G_2^{+}(h_1,2)=\emptyset$. Similarly, we have $\G_2^{+}(h_i,2)\cap \G_2^{+}(h_j,2) =\emptyset$ for each $i\neq j\in\{1,\cdots,t\}$.
 Hence, $1_2$ is the unique vertex in $\G_2^{+}(1_1)$ which has a common out-neighbor with exactly $t-1$ vertices in $\G_2^{+}(1_1)$. In particular, $A_{1_1}$ fixes $1_2$.
 \smallskip

Secondly, we consider $\G_2^{+}(1_2)=T\times \{1\}=\{h_1h_t,
 \overline{h_1},\cdots,\overline{h_t}\}\times \{1\}$. We will prove that for any vertex $u$ in $\G_2^+(1_2)$, there is at most two vertices $v$ in $\G_2^+(1_2)\setminus\{u\}$ such that $u$ and $v$ have a common out-neighbor.
Now, we will deal with $(h_1h_t,1)$ and $(\overline{h_1},1)$. Since $\G_2^{+}(h_1h_t,1)=(Sh_1h_t)\times \{2\}=\{h_1h_t,h_1^2h_t,h_2h_1h_t,\cdots,h_th_1h_t\}\times \{2\}$ and  $\G_2^{+}(\overline{h_i},1)=(S\overline{h_i})\times \{2\}=\{\overline{h_i},h_1\overline{h_i},\cdots,h_t\overline{h_i}\}\times \{2\}$ for each $i\in\{3,\cdots, t\}$,
  we have that $\G_2^{+}(h_1h_t,1)\cap \G_2^{+}(\overline{h_i},1)=\emptyset$ for each $i\in\{3,\cdots,t\}$.
    Therefore, there is at most two vertices in $\G_2^{+}(1_2)$ has a common out-neighbor with $(h_1h_t,1)$.
   Similarly, since $\G_2^{+}(\overline{h_1},1)=(Sh_1)\times \{2\}=\{h_1,h_1^2,h_2h_1,\cdots,h_th_1\}\times \{2\}$, we have $\G_2^{+}(h_1,1)\cap \G_2^{+}(\overline{h_i},1)=\emptyset$ for each $i\in \{3,\cdots,t\}$.
  Therefore, there is at most two vertices in $\G_2^{+}(1_2)$ has a common out-neighbor with $(h_1,1)$. 
  Now, we consider $(\overline{h_i},1)$ with $i\in\{2,\cdots,t\}$.
  Recall that 
  $\G_2^{+}(\overline{h_i},1) =(S\overline{h_i})\times \{2\}=\{\overline{h_i},h_1\overline{h_i},\cdots,h_t\overline{h_i}\}\times \{2\}$ for each $i\in\{2,\cdots,t\}$.
For $i\neq j\in\{2,\cdots,t\}$, we have
 	\begin{align*}
 	\G_2^{+}(\overline{h_i},1)\cap \G_2^{+}(\overline{h_j},1) &=
 	\begin{cases}
 		\{\overline{h_{j}}\}&\text{if }j=i+1 \text{ and } h_j\overline{h_{j-1}}=\overline{h_{j}},\\ 			 \{\overline{h_{i}}\}&\text{if }i=j+1 \text{ and } h_i\overline{h_{i-1}}=\overline{h_{i}},\\
 			\emptyset&\text{ otherwise}.
 	\end{cases}
 \end{align*}
 Therefore, there is at most two vertices in $\G_2^{+}(1_2)$ has a common out-neighbor with $(\overline{h_{i}},1)$ for each $i\in\{2,\cdots,t\}$. Since $t\geq 4$, we have $t-1\geq 3$. Recall that there is $t-1$ vertices has a common out-neighbor with $1_2$ in $\G_2^{+}(1_1)$. Thus, $[\G_2^+(1_1)]\cong [\G_2^+(1_2)]$ and so $A$ stabilizes $G_1$ and $G_2$, as claimed.\smallskip
  
  Now, we are ready to prove $\G_2$ is a $2$-HOR.
  As $R(G)\leq A$, $A$ has exactly two orbits $G_1$ and $G_2$ and hence $A=R(G)A_{1_1}=R(G)A_{1_2}$. Since $G$ is semiregular and $A_{1_1}\leq  A_{1_2}$, $A_{1_2}=A_{1_1}$ holds and so $A_{g_1}=A_{g_2}$. Similar as the arguments in Lemma~\ref{lem=m-HOR}, we will prove $\{g_1,g_2\}$ is a block for each $g\in G$, and then prove $A_{1_1}=1$. Let $\sigma\in A$. Suppose $\{g_1,g_2\}^{\sigma}\cap \{g_1,g_2\}\not=\emptyset$. Since $A_{g_1}=A_{g_2}$ and $A$ stabilizes $G_i~(i=1,2)$,
 we have
 $$g_1^{\sigma}=g_1 \mbox{~and ~}g_2^{\sigma}=g_2.$$
 	This means that $\{\{g_1,g_2\}\ |\ g\in G\}$ is a block system of $A$ on $V(\G_2)$, and hence there is a permutation $\overline{\sigma}$ of $G$ satisfying $(g^{\overline{\sigma}})_1=(g_1)^{\sigma}$ and $(g^{\overline{\sigma}})_2=(g_2)^{\sigma}$ for each $g\in G$. By Proposition~\ref{prop=Haar}, $\overline{\sigma}\in \Aut(\Cay(G,S))\cap \Aut(\Cay(G,T))$.
 	Let $\a\in A_{1_1}$. Then $\overline{\a}\in \Aut(\Cay(G,S))_1 \cap \Aut(\Cay(G,T))_1$. It follows that $S^{\overline{\a}}=S$
 and $T^{\overline{\a}}=T$.
 Moreover, $(S^i)^{\overline{\a}}=S^i$ for each $i\in\{1,\cdots,t\}$. In particular, $(S^i\cap T)^{\overline{\a}}=S^i\cap T$ for each $i\in\{1,\cdots,t\}$.
 Since $S\cap T=\{h_1\}$, $S^2\cap T=\{h_1h_t,h_1h_2\}$,
 and $S^i\cap T=\{\overline{h_i}\}$ for each $i\in\{3,\cdots,t\}$, we have  $\{h_1\}^{\overline{\a}}=\{h_1\}$, $\{h_1h_t,h_1h_2\}^{\overline{\a}}=\{h_1h_t,h_1h_2\}$, and  $\{\overline{h_i}\}^{\overline{\a}}=\{\overline{h_i}\}$  for each $i\in\{3,\cdots,t\}$. Therefore, $\overline{\a}$ fixes $h_i$ as $h_i=(\overline{h_{i-1}})^{-1}\overline{h_{i}}$ for each $i\in\{4,\cdots,t\}$. In particular, $\overline{\a}$ fixes $h_t$, and so  $\overline{\a}$ fixes $h_1h_t$.
 It follows that  $\overline{\a}$ fixes $\{h_1h_2\}$ and so  $\overline{\a}$ fixes $h_2$ and $h_3=(\overline{h_2})^{-1}\overline{h_3}$. Thus, $\overline{\a}$ fixes $S$ pointwise, it implies that $\overline{\a}$ fixes $\langle S\rangle=G$ pointwise. Thus $\overline{\a}=1$, and so $\a=1$.
 Therefore, $A_{1_1}=1$, and so $A=R(G)$. Hence, $\G_2$ is a $2$-HOR of $G$.

 	\medskip
 \f	{\bf Case 2:} $m=3$
 \medskip

  Let $L=\{h_1,h_2h_3,h_1h_i\mid 2\leq i\leq t\}$. Then $|L|=t+1$ and
 $S\cap L^{-1}=\emptyset$.
 Let  $\G_3=\Cay(G,(T_{i,j})_{3\times 3})$ with $T_{1,2}=T_{2,3}=T_{3,1}=S$, $T_{1,3}=T_{3,2}=L$ and $T_{2,1}=T$. Then $\G_3$ is a connected $3$-Haar oriented graph of $G$ with valency $2t+2$. Let $A=\Aut(\G_3)$.
 By the construction of $\G_3$, we have
\begin{align*}
\G_3^+(1_1)&=S\times \{2\}\cup L\times \{3\}=\{1,h_i\mid 1\leq i\leq t\}\times \{2\}\cup \{h_1,h_2h_3,h_1h_i\mid 2\leq i\leq t\}\times \{3\}, \\
\G_3^+(1_2)&=T\times \{1\}\cup S\times \{3\}=\{h_1h_t,\overline{h_i}\mid 1\leq i\leq t\}\times \{1\}\cup \{1,h_i\mid 1\leq i\leq t\}\times \{3\}, {\rm and }\\
\G_3^+(1_3)&=S\times \{1\}\cup L\times \{2\}=\{1,h_i\mid 1\leq i\leq t\}\times \{1\}\cup \{h_1,h_2h_3,h_1h_i\mid 2\leq i\leq t\}\times \{2\}.
\end{align*}
 In the following, we consider the  sub-digraphs $[\G_3^+(1_i)]$ induced by $\G_3^+(1_i)$ to prove $A$ stabilizes $G_i$ for each $i\in\{1,2,3\}$. First, we consider $[\G_3^+(1_2)]$. Since $T_{1,3}=L=\{h_1,h_2h_3,h_1h_i\mid 2\leq i\leq t\}$ and $t\geq 4$, we have that there is no arc from $(\overline{h_t}, 1)$ to $1_3$ or $(h_i,3)$ with $i\in\{1,\cdots,t\}$. Again, since $T_{3,1}=S=\{1,h_i\mid 1\leq i\leq t\}$, we have that there is no arc from $1_3$ or $(h_i,3)$ to $(\overline{h_t},1)$.  Therefore, $(\overline{h_t},1)$ is an isolated vertex in $[\G_3^+(1_2)]$.
For $[\G_3^+(1_1)]$, since $T_{2,3}=S=\{1,h_i\mid 1\leq i
\leq t\}$,
we have that there is an arc from $1_2$ to $(h_1,3)$, an arc from $(h_3,2)$ to $(h_2h_3,3)$, and an arc from $(h_i,2)$ to $(h_1h_i,3)$ for each $i\in\{2,\cdots,t\}$. Thus there is no isolated vertex in $[\G_3^+(1_1)]$. Similarly, since
$T_{1,2}=S$, there is an arc from $1_1$ to $(h_1,2)$, an arc from $(h_3,1)$ to $(h_2h_3,2)$, and an arc from $(h_i,1)$ to $(h_1h_i,2)$ for each $i\in\{2,\cdots,t\}$, and so there is no isolated vertex in $[\G_3^+(1_3)]$. It follows that $[\G_3^+(1_2)]\ncong [\G_3^+(1_i)]$ with $i=1,3$, and so $A$ stabilizes $G_2$.
If there exists $\a\in A$ such that $1_1^{\a}=1_3$, then $\a$ maps  $[\G_3^+(1_1)]=[S\times \{2\}\cup L\times \{3\}]$ to $[\G_3^+(1_3)]=[S\times \{1\}\cup L\times \{2\}]$.
Note that $A$ stabilizes $G_2$. Then $\a$ maps $S\times \{2\}$ in $[\G_3^+(1_1)]$ to $L\times \{2\}$ in $[\G_3^+(1_3)]$ and so maps the arcs from $S\times \{2\}$ to $L\times \{3\}$ in $[\G_3^+(1_1)]$ to the arcs from $L\times \{2\}$ to $S\times \{1\}$ in $[\G_3^+(1_3)]$.
Note that there is an arc from $(s,2)$ to $L\times \{3\}$ for each $s\in S$. However, there is no arc from $(h_1h_{t-1},2)$ to $S\times \{1\}$ as $T_{2,1}=T=\{h_1h_t,\overline{h_i}\mid 1\leq i\leq t\}$ and $|h_1|\geq 3$. Therefore, $A$ stabilizes $G_1$ and $G_3$, respectively. It implies that $A$ stabilizes $G_i$ for each $i\in\{1,2,3\}$.\smallskip

Now, we are in the position to prove $A_{1_1}=1$. Since $[G_1\cup G_2]\cong \G_2$ is a 2-HOR of $G$, we have that $A_{1_1}$ fixes $G_1\cup G_2$ pointwise. If there exists $\beta \in A_{1_1}$ such that $(g,3)^{\beta}=(h,3)$ for some $g\neq h\in G$. Then $\beta$ maps $\G_3^+(g,3)\cap G_1$ to $\G_3^+(h,3)\cap G_1$,
that is, $\beta$ maps $Sg\times \{1\}$ to $Sh\times \{1\}$. Recall that $\beta \in A_{1_1}$ fixes $G_1$ pointwise. Then $Sg=Sh$ and so $g=h$, a contradiction.
Thus, $A_{1_1}$ fixes $G_3$ pointwise. It follows that $A_{1_1}$ fixes $G_1\cup G_2 \cup G_3$ pointwise and so $A_{1_1}=1$. Therefore, $A=R(G)A_{1_1}=R(G)$ and $\G_3$
is a 3-HOR of $G$.

\medskip
\f	{\bf Case 3:} $m\geq4$ is even
\medskip

Let $\widetilde {h_1}=h_2\cdots h_t$,
$\widetilde{h_t}=h_1\cdots h_{t-1}$ and $\widetilde{h_i}=h_1\cdots h_{i-1}h_{i+1}\cdots h_t$ with $2\leq i\leq t-1$.
Let $$M=\{h_1h_3,h_1h_4,\overline{h_i}\mid 2\leq i\leq t\} {\rm~and~} N=\{h_2h_3,h_2h_4,\widetilde{h_i}\mid1\leq i\leq t-1\}.$$ Then $|M|=|N|=t+1$ and $M\cap N^{-1}=\emptyset$. In the following, we will divided the proof into two subcases: $m=4$ and $m>4$. 

\medskip
\f	{\bf Subcase: 3.1} $m=4$
\medskip

Let $\G_4=\Cay(G,(T_{i,j})_{4\times 4})$ with $T_{1,2}=S$, $T_{2,1}=T$, $T_{3,4}=M$, $T_{4,3}=N$, $T_{1,3}=T_{1,4}=T_{2,3}=T_{2,4}=\{1\}$, $T_{3,2}=\{h_2\}$ and $T_{3,1}=T_{4,1}=T_{4,2}=\{h_1\}$. Then $\G_4$ is a 4-Haar oriented  graph of $G$ with valency $t+3$.
The left one in Figure~\ref{Fig3} might be of some help for understanding the structure of $\G_4$.
By the construction of $\G_4$, we have \begin{align*}
	\G_4^+(1_1)&=S\times \{2\}\cup \{1_3\}\cup \{1_4\}, \\
	\G_4^+(1_2)&=T\times \{1\}\cup\{1_3\}\cup \{1_4\}, \\
	\G_3^+(1_3)&=M\times \{4\}\cup (h_1,1)\cup(h_2,2) ,
	{\rm and }\\
		\G_4^+(1_4)&=N\times \{3\}\cup (h_1,1)\cup(h_1,2).
\end{align*}

\vspace{-2mm}

\begin{figure}	
		\centering
	\includegraphics[width=13cm]{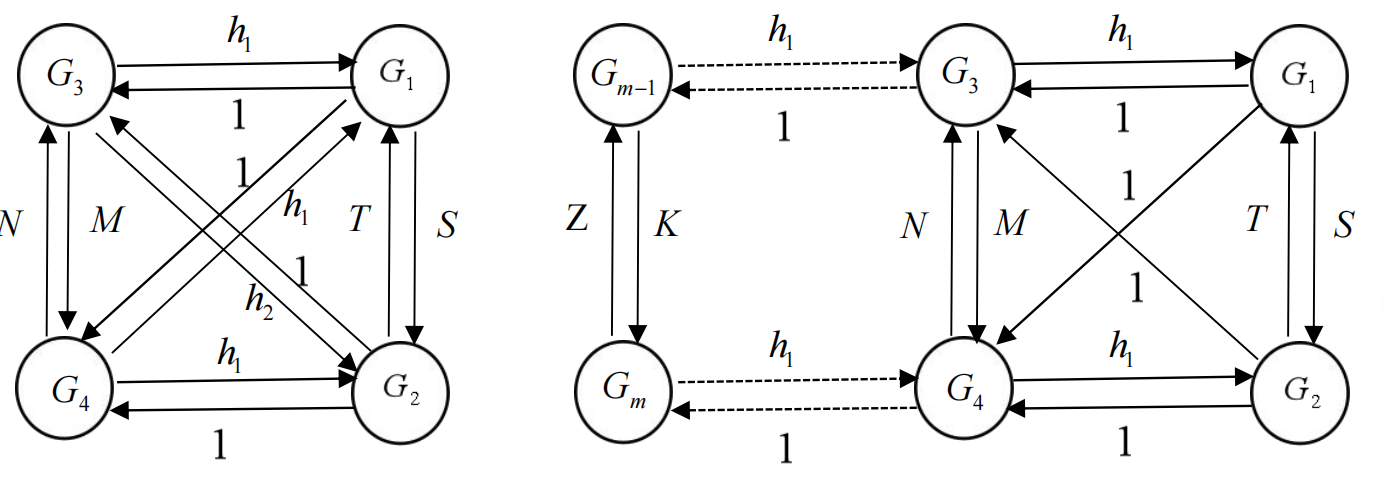}	
	\caption{\small{ The $4$-Haar oriented graph $\G_4$ and the $m$-Haar oriented graph $\G_m$ with $m\geq 6$ is even}}\label{Fig3}
\vspace{-2mm}
\end{figure}

Now we consider the induced sub-digraphs $[\G_4^+(1_i)]$ with $i\in \{1,\cdots,4\}$. Since $S=\{1,h_i\mid 1\leq i\leq t\}$, $T_{2,3}=T_{2,4}=\{1\}$, $T_{3,2}=T_{4,2}=\{h_1\}$, $T_{3,4}=M=\{h_1h_3,h_1h_4,\overline{h_i}\mid 2\leq i\leq t\}$, and $T_{4,3}=N=\{h_2h_3,h_2h_4,\widetilde{h_i}\mid 1\leq i\leq t-1\}$, we have that there are 4 arcs in $[\G_4^+(1_1)]$, that is, the arc from $1_2$ to $1_3$, the arc from $1_2$ to $1_4$, the arc from $1_3$ to $(h_2,2)$, and the
arc from $1_4$ to $(h_1,2)$. Similarly, since $T=\{h_1h_t,\overline{h_i}
\mid 1\leq i\leq t\}$, $T_{3,1}=T_{4,1}=\{h_1\}$, $T_{1,3}=T_{1,4}=\{1\}$, $T_{3,4}=M$ and $T_{4,3}=N$, we have that there are 2 arcs in $[\G_4^+(1_2)]$, that is, the arc from $1_3$ to $(h_1,2)$, and the
arc from $1_4$ to $(h_1,2)$.  Since $T_{4,1}=T_{4,2}=\{h_1\}$,
$T_{1,4}=T_{2,4}=\{1\}$ and $T_{1,2}=S$, we have that there is no arc in $[\G_4^+(1_3)]$. Again, since $T_{3,1}=\{h_1\}$, $T_{3,2}=\{h_2\}$,
$T_{1,3}=T_{2,3}=\{1\}$ and $T_{1,2}=S$, we have that there is an unique arc in $[\G_4^+(1_4)]$, that is, the arc from $(h_1,1)$ to $(h_1,2)$. Therefore, $[\G_4^+(1_i)]\ncong [\G_4^+(1_j)]$ with $i\neq j\in\{1,\cdots,4\}$.
Thus, $A$ stabilizes $G_i$ for each $i\in \{1,\cdots,4\}$.
Since $[G_1\cup G_2]\cong \G_2$ is a 2-HOR of $G$, we have $A_{1_1}$ fixes $G_1\cup G_2$ pointwise. Since $T_{1,3}=T_{1,4}=\{1\}$ and $A$ stabilizes $G_i$ with $i=3,4$, we have that $A_{1_1}$ fixes $G_3\cup G_4$ pointwise.
Therefore, $A_{1_1}=1$ and $\G_4$ is a 4-HOR of $G$.

\medskip
\f	{\bf Subcase 3.2:} $m>4$ is even
\medskip

Let $m>4$ be an even integer and
let $K=\{1,h_1h_2,h_i\mid 1\leq i\leq t\}$,
$Z=\{h_1h_t,\overline{h_t},\widetilde{h_i}\mid 1\leq i\leq t\}$. Then $|K|=|Z|=t+2$ and $K\cap Z^{-1}=\emptyset$.
Let  $\G_m=\Cay(G,(T_{i,j})_{m\times m})$ with
\begin{eqnarray*}
	T_{1,2}=S, \,\,\,\,\,\, T_{2,1}=T,\,\,\,\,\,\,\,\, T_{m-1,m}=K, \,\,\,\,\,\, T_{m,m-1}&=&Z; \\
	 T_{1,4}=T_{2,3}=T_{i,i+2}&=&\{1\}\,\,\,\,\,\,\,\,\,\,\,\,\,\,\,\,\,\,\mbox{ for } 1\leq i\leq m-2;\\
	T_{i,i-2}&=&\{h_1\},\,\,\,\,\,\,\,\,\,\,\,\,
	\mbox{ for } 3\leq i\leq m;\\
	 T_{i,i+1}&=&M,\,\,\,\,\,\,\,\,\,\,\,\,\,\,\,\,\,\,\mbox{ for } 3\leq i\leq m-3 \mbox{ is odd}; \\
	 	 T_{i,i-1}&=&N,\,\,\,\,\,\,\,\,\,\,\,\,\,\,\,\,\,\,\mbox{ for } 4\leq i\leq m-2 \mbox{ is even}; \\
	 T_{i,j}&=&\emptyset\,\,\,\,\,\,\,\,\,\,\,\,\,\,\,\,\,\,\,\,\,\,\,\,\mbox{ otherwise.}
\end{eqnarray*}
Since $S\cap T^{-1}=M\cap N^{-1}=K\cap Z^{-1}=\emptyset$ and  $|S|+2=|T|+2=|M|+2=|N|+2=|K|+1=|Z|+1=t+3$, we have that $\G_m$ is a connected $m$-Haar oriented graph of $G$ with valency $t+3$.
The right one in Figure~\ref{Fig3} might be of some help for understanding the structure of $\G_m$. Let $A=\Aut(\G_m)$. By the construction of $\G_m$, we have \begin{align*}
	\G_m^+(1_1)&=S\times \{2\}\cup \{1_3\}\cup \{1_4\}, \\
	\G_m^+(1_2)&=T\times \{1\}\cup\{1_3\}\cup \{1_4\}, \\
	\G_m^+(1_i)&=M\times \{i+1\}\cup (h_1,i-2)\cup \{1_{i+2}\} \mbox{ for } 3\leq i\leq m-3 \mbox{ is odd},\\
	\G_m^+(1_i)&=N\times \{i-1\}\cup \{(h_1,i-2)\}\cup \{1_{i+2}\}\mbox{ for } 4\leq i\leq m-2 \mbox{ is even},\\
		\G_m^+(1_{m-1})&=K\times \{m\}\cup \{(h_1,m-3)\},{\rm~and~}\\
				\G_m^+(1_{m})&=Z\times \{m-1\}\cup \{(h_1,m-2)\}.
\end{align*}
 In the following, we consider the induced sub-digraphs $[\G_m^+(1_i)]$ for each $i\in \{1,\cdots,m\}$. Similar as the Subcase 3.1, there are 3 arcs in $[\G_m^+(1_1)]$, that is, the arc from $1_2$ to $1_3$ and the arc from $1_2$ to $1_4$, and the arc $1_4$ to $(h_1,2)$; there is an unique arc in $[\G_m^+(1_2)]$, that is, the arc from $1_3$ to $(h_1,1)$, and there is no arc in $[\G_m^+(1_i)]$ for each $i\in \{3,\cdots,m\}$.
 Thus, $A$ stabilizes $G_1$ and $G_2$ respectively.
 Since $[G_1\cup G_2]\cong \G_2$ is a 2-HOR of $G$, we have $A_{1_1}$ fixes $G_1\cup G_2$ pointwise. Since $T_{1,3} = T_{1.4} = \{1\}$ and $T_{1,j} =\emptyset$ for each $j\geq 5$, it follows that $A$ stabilizes $G_3 \cup G_4$. If there exists $\a \in A$ such that $(g_3)^{\a}=h_4$ for some $g\neq h\in G$, then $\a$ maps $\G_m^+(g_3)=(Mg)_4\cup \{h_1g,1\}\cup \{g_5\}$ to $\G_m^+(h_4)=(Nh)_3\cup \{h_1h,2\}\cup \{h_6\}$. Since $A$ fixes $G_1$ and $G_2$ pointwise, this is impossible. Thus, $A$ stabilizes $G_3$ and $G_4$ setwise respectively.
Thus, $A_{1_1}$ stabilizes $G_3\cup G_4$ pointwise. Repeating this argument, we derive
 that $A$ stabilizes $G_i$ for each $i \in \{1,\cdots,m\}$, and $A_{1_1}$ fixes $G_1\cup \cdots \cup G_m$ pointwise.
 Therefore, $\G_m$ is an $m$-HOR of $G$.

\medskip
\f	{\bf Case 4:} $m\geq5$ is odd
\medskip

Let $m\geq 5$ be an odd integer and let $E=\{1,h_i\mid 1\leq i\leq t-1\}$ and
$F=\{\overline{h_i}\mid 1\leq i\leq t\}$. Then $|E|=|F|=t$ and $E\cap F^{-1}=\emptyset$. Let $\G_m=\Cay(G,(T_{i,j})_{m\times m})$ with
\begin{eqnarray*}
	T_{1,2}=T_{m-1,m}=T_{m,m-2}=S, \,\,\,\,\,\, T_{2,1}&=&T; \\
T_{i,i+2}&=&\{1\}\,\,\,\,\,\,\,\,\,\,\,\,\,\,\,\,\,\,\mbox{ for } 1\leq i\leq m-3;\\
	T_{m,m-1}=T_{m-2,m}=T_{i,i-2}&=&\{h_1\},\,\,\,\,\,\,\,\,\,\,\,\,
	\mbox{ for } 3\leq i\leq m-1;\\
	T_{i,i+1}&=&E,\,\,\,\,\,\,\,\,\,\,\,\,\,\,\,\,\,\,\mbox{ for } 3\leq i\leq m-2 \mbox{ is odd}; \\
	T_{i,i-1}&=&F,\,\,\,\,\,\,\,\,\,\,\,\,\,\,\,\,\,\,\mbox{ for } 4\leq i\leq m-3 \mbox{ is even}; \\
	 T_{i,j}&=&\emptyset\,\,\,\,\,\,\,\,\,\,\,\,\,\,\,\,\,\,\,\,\,\,\,\,\mbox{ otherwise.}
\end{eqnarray*}
Since $|h_1|\geq 3$, $S\cap T^{-1}=E\cap F^{-1}=\emptyset$ and  $|S|+1=|T|+1=|E|+2=|F|+2=t+2$, we have that $\G_m$ is a connected $m$-Haar oriented graph of $G$ with valency $t+2$.
 Figure~\ref{Fig4} might be of some help for understanding the structure of $\G_m$. Let $A=\Aut(\G_m)$. \smallskip

\begin{figure}
				\vspace{-2mm}
		\centering
	\includegraphics[width=10cm]{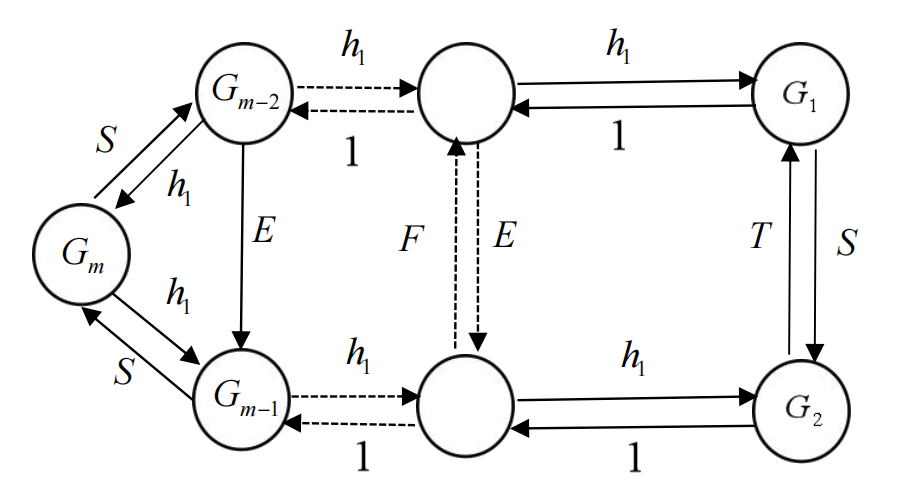}		
\vspace{-2mm}
	\caption{\small{The $m$-Haar oriented graph $\G_m$~with $m\geq 5$ is odd}}\label{Fig4}
\vspace{-2mm}	
\end{figure}

By the definition of $\G_m$, we have that $(1_{m-2},1_{m-1},1_m)$ is a directed 3-cycle as $1\in T_{m-2,m-1}\cap T_{m-1,m}\cap T_{m,m-2}$.
Hence, there is a 3-cycle through $g_i$ for each $g \in G$ and $i\in \{m-2,m-1,m\}$.
On the other hand, for each $i \in\{1,\cdots, m-3\}$, there is no 3-cycle through $g_i$ for any
$g \in G$. Hence $A$ stabilizes $G_{m-2} \cup G_{m-1}\cup G_m$. It follows that $A$ stabilizes $G_{m-4}\cup G_{m-3}$ as $T_{m-2,m-4}=T_{m-1,m-3}=\{h_1\}$ and $T_{m,j}=T_{m-1,j}=T_{m-2,j}=\emptyset$ for each $j\in \{1,\cdots,m-5\}$. Repeating this step, we have that $A$ stabilizes
$G_i\cup G_{i+1}$ for each $1\leq i\leq m-4$ with $i$ is odd.
In particular, $A$ stabilizes $G_1 \cup G_2$. Note that $[G_1\cup G_2]\cong \G_2$ is a 2-HOR of $G$. Therefore, $A$ stabilizes $G_1$ and $G_2$, respectively, and that $A_{1_1}$
fixes $G_1$ and $G_2$ pointwise.
 Repeating this argument, we derive
that $A$ stabilizes $G_i$ for each $i \in \{1,\cdots,m\}$, and $A_{1_1}$ fixes $G_1\cup \cdots\cup G_m$ pointwise. It follows that $\G_m$ is an $m$-HOR of $G$. This completes the proof.
	\end{proof}
	
\begin{proof}[\bf Proof of Theorem~$\ref{theo=main}$]
Let $m\geq 2$ be an integer and let $G$ be a finite group.  If $G$ is an elementary abelian $2$-group, by \cite[Theorem 1.1]{DFB},  $G$ has no $m$-HOR if and only if $m=2$, and $G\cong\mz_2^2$, $\mz_2^3$ or $\mz_2^4$;
$2\leq m\leq 3$, and $G\cong \mz_2$; or $2\leq m\leq 6$ and $G\cong\mz_1$. If $G$ is a non-elementary abelian $2$-group, then Theorem~\ref{theo=main} follows directly from Lemma~\ref{lem=m-HOR} for groups with at most three generators, and from Lemma~\ref{lem=m-HOR1} for those with at least four generators.    
\end{proof}
\section{$m$-partite oriented semiregular representation}

\begin{proof}[\bf Proof of Theorem~$\ref{theo=main1}$]
	Suppose $G$ has no $m$-POSR.
	Since an $m$-HOR is an $m$-POSR, Theorem~\ref{theo=main} implies that $G\cong Q_8$, $\mz_3$, $\mz_4$, $\mz_5$ or $\mz_2^n$ with $0\leq n\leq  4$. \smallskip

	 Assume $G$ is an elementary abelian $2$-group. Since an O$m$SR of an elementary 2-abelian group is an $m$-POSR, by \cite[Theorem 1.2]{DFB}, $G$ has no $m$-POSR if and only if $m=2$ and $G\cong \mz_2^2$ or $\mz_2^3$. \smallskip
	
	 Assume $G=\left\langle x,y\mid x^4=y^4=1,x^2=y^2, x^y=x^{-1}\right\rangle\cong  Q_8$. By Magma~\cite{magma}, $\Cay(G,(T_{i,j})_{2\times 2})$ with $T_{1,2}=\{1,x,y\}$ and $T_{2,1}=\{xy,x^2\}$ is a $2$-POSR of $G$.\smallskip
	
	 Assume $G=\left\langle x\mid x^k=1\right\rangle\cong  \mz_k$ with $3\leq k\leq 5$. By MAGMA~\cite{magma}, $\mz_3$ has no $2$-POSR,
	 $\Cay(G,(T_{i,j})_{2\times 2})$ with $T_{1,2}=\{1,x\}$ and $T_{2,1}=\{x^2\}$ is a $2$-POSR of $\mz_4$; and 	 $\Cay(G,(T_{i,j})_{2\times 2})$ with $T_{1,2}=\{1,x,x^{-1}\}$ and $T_{2,1}=\{x^2\}$ is a $2$-POSR of $\mz_5$.
\end{proof}
	
	\f {\bf Acknowledgement:}
	The author was supported by the National Natural Science Foundation of China (12571370, 12371025) and National Key R\&D Program of China (211070B62501).


\begin{thebibliography}{99}
		\bibitem{Babai}
		L. Babai, Finite digraphs with given regular automorphism groups,
		Period. Math. Hungar. 11 (1980), 257--270.
		
		
		
		\bibitem{magma}
		W. Bosma, C. Cannon, C. Playoust, The MAGMA algebra system I: The user language, J. Symbolic Comput. 24 (1997), 235--265.

		
			\bibitem{DFB}
		J.~-L.~Du, Y.~-Q.~Feng, S.~Bang, On oriented $m$-semiregular representations of finite groups, J. Graph Theory, 107 (2024), 485--508.
		
		
		
		\bibitem{DFS2020-1}
		J.~-L.~Du, Y.~-Q.~Feng, P.~Spiga, On Haar digraphical representations of groups, Discrete Math. 343 (2020), 112032: 1--6.
			
		\bibitem{DFS2022}
			J.~-L.~Du, Y.~-Q.~Feng, P.~Spiga,
		On $n$-partite digraphical representation of finite groups, J. Combin. Theory Ser. A 189 (2022), 105606: 1--13.
		
				\bibitem{DFXY}
		J.~-L.~Du, Y.~-Q.~Feng, B. Xia, D.-W. Yang, The existence of $m$-Haar graphical representations,
		J. Combin. Theory Ser. A 218 (2026), 106096: 1--30.
		
		
			\bibitem{DKY}
		J.~-L.~Du,
		Y. S. Kwon, F.-G. Yin, On $m$-partite oriented semiregular representations of ﬁnite
		groups generated by two elements, Discrete Math. 347 (2023), 114043:1--8.
		
		
	
		
		\bibitem{Godsil}
		C.D. Godsil, GRR's for non-solvable groups, in Algebraic Methods in Graph theory (Proc. Conf. Szeged 1978 L. Lov$\acute{a}$sz and V. T. Sos, eds), Coll. Math. Soc. J. Bolyai 25,
		North-Holland, Amsterdam, 1981, pp.221--239.
		
	

	\bibitem{Hetzel}
	D. Hetzel,  $\ddot{U}$ber regul$\ddot{a}$re graphische Darstellung von aufl$\ddot{o}$sbaren Gruppen, Technische Universit\"at, Berlin, 1976. (Diplomarbeit)
	
	\bibitem{Imrich}
	W. Imrich, Graphical regular representations of groups odd order, in: Combinatorics, Coll. Math. Soc. J\'anos. Bolayi 18 (1976), 611--621.
	
	
	\bibitem{ImrichWatkins2}
	W. Imrich, M. E. Watkins, On automorphism groups of Cayley graphs,
	Period. Math. Hungar. 7 (1976), 243--258.
	\bibitem{Konig}
	D. K$\ddot{o}$nig, Theory of finite and infinite graphs, translated from the German by Richard McCoart,
	with a commentary by W. T. Tutte and a biographical sketch by T. Gallai, Birkhauser Boston, Inc.,
	Boston, MA, 1990.
	
	
	
		
		\bibitem{ImrichWatkins}
		W. Imrich,  M.E. Watkins, On graphical regular representations of cyclic extensions of groups, Pac. J. Math. 55 (1974), 461--477.
		
	
		\bibitem{MorrisSpiga1}
		J. Morris, P. Spiga, Every finite non-solvable group admits an oriented regular representation, J. Combin. Theory Ser. B 126 (2017), 198--234.
		
		\bibitem{MorrisSpiga2}J. Morris, P.~Spiga, Classification of finite groups that admit an oriented regular
		representation, Bull. Lond. Math. Soc. 50 (2018), 811--831.
		
		
		\bibitem{MorrisSpiga3}J. Morris, P.~Spiga, Haar graphical representations ofﬁnite groups and an application to poset representations, J. Combin. Theory Ser. B 173 (2025), 279--304.
	
	
		\bibitem{NowitzWatkins1}
		L.~A.~Nowitz, M.~E.~Watkins, Graphical regular representations of non-abelian groups, $I$,
		Canad. J. Math. 24 (1972), 994--1008.
		
		\bibitem{NowitzWatkins2}
		L.~A.~Nowitz, M.~E.~Watkins, Graphical regular representations of non-abelian groups, $II$,
		Canad. J. Math. 24 (1972), 1009--1018.
		
		\bibitem{Spiga}
		P. Spiga, Finite groups admitting an oriented regular representation,
		J. Combin. Theory Ser. A 153 (2018), 76--97.
		
		
		
		
	\end{thebibliography}
\end{document}